\setlist[enumerate]{leftmargin=.5in}
\setlist[itemize]{leftmargin=.5in}
\crefname{hypothesis}{Hypothesis}{Hypotheses}
\title{Multilevel Markov Chain Monte Carlo with likelihood scaling \\ for Bayesian inversion with high-resolution observations
\thanks{Submitted to the editors January 26th, 2024. Revised version submitted November 6th, 2024.
\funding{P. Vanmechelen is an SB PhD fellow of the Research Foundation Flanders (FWO), funded by Grant No. 1SD1823N.}}}
\author{P. Vanmechelen\thanks{KU Leuven, Department of Computer Science.}
\and G. Lombaert\thanks{KU Leuven, Department of Civil Engineering.}
\and G. Samaey\footnotemark[2]}
\begin{document}

\maketitle

% REQUIRED
\begin{abstract}
We propose a multilevel Markov chain Monte Carlo (MCMC) method for the Bayesian inference of random field parameters in PDEs using high-resolution data. Compared to existing multilevel MCMC methods, we additionally consider level-dependent data resolution and introduce a suitable likelihood scaling to enable consistent cross-level comparisons. We theoretically show that this approach attains the same convergence rates as when using level-independent treatment of data, but at significantly reduced computational cost. The convergence analysis focuses on Lipschitz continuous transformations of Gaussian random fields with Mat\'ern covariance structure. These results are illustrated using numerical experiments for a 2D plane stress problem, where the Young's modulus is estimated from discretisations of the displacement field.
\end{abstract}

% REQUIRED
\begin{keywords}
  Markov chain Monte Carlo, multilevel methods, Bayesian inverse problems, structural mechanics
\end{keywords}

% REQUIRED
\begin{AMS}
  35R60 (PDEs with randomness), 62F15 (Bayesian inference), 62M05 (Markov processes: estimation), 65C05 (MC methods), 65C40 (Numerical methods for Markov chains), 65N30 (FE methods for PDEs)
\end{AMS}

\bibliographystyle{abbrv}

\section{Introduction}
Many problems in science and engineering consist of estimating some set of input parameters $\vartheta$ in a forward model $\mathcal{F}$, based on observations $u^{\text{obs}} \in \mathbb{R}^N$ of the model output. In the context of this paper, we are interested in the case where $\vartheta$ is infinite-dimensional, denoted as $\vartheta \in \mathbb{R}^{\mathbb{N}}$. Generally speaking, we assume $u^{\text{obs}}$ to be related to $\vartheta$ as
\begin{equation}
\label{eq:bip}
u^{\text{obs}} = \mathcal{F}(\vartheta) + \eta,
\end{equation}
where $\eta$ represents the observational noise. The specific structure of $\eta$ impacts the observations and care should be taken when modelling this noise. The noise model used in this paper will be specified in Section \ref{ch:beammodel}. A common example of model \eqref{eq:bip} is when $\mathcal{F}$ is the projection onto the data space of the solution to some known partial differential equation (PDE), and $\vartheta$ are some unknown input parameters to the PDE. Within the context of this paper, we will consider a 2D plane stress problem, where the equations of linear elasticity are solved using linear finite elements (FE). These equations are introduced in Section \ref{ch:beammodel}.

Solving inverse problems is often done by an optimization strategy, where a value of $\vartheta$ is sought that optimally matches the observations $u^{\text{obs}}$. The alternative approach considered here is Bayesian inference through sampling, where a full posterior parameter distribution conditional on the observations is sought \cite{law_data_2015}, here represented by its density $P(\vartheta | u^{\text{obs}})$. The posterior density can be calculated from two factors. The first is the prior density $\pi(\vartheta)$, which encompasses all information available prior to the measurements and is typically straightforward to calculate. Second is the likelihood $\mathcal{L}(u^{\text{obs}}|\vartheta)$, which contains the information obtained using the observations. A single likelihood evaluation for a given parameter value $\vartheta$ consists of solving the forward problem with $\vartheta$ as input and comparing the output to the available data, taking into account the noise model. These factors are combined in Bayes' rule
\begin{equation}
P(\vartheta | u^{\text{obs}}) = \dfrac{\mathcal{L}(u^{\text{obs}}|\vartheta)\pi(\vartheta)}{P(u^{\text{obs}})} \simeq \mathcal{L}(u^{\text{obs}}|\vartheta)\pi(\vartheta).
\end{equation}
The term $P(u^{\text{obs}})$ in the denominator is called the evidence and is typically infeasible to calculate as it involves integrating over the entire data space. However, as it does not depend on $\vartheta$ this is only a normalising constant and most sampling approaches can avoid calculating it directly.

A widely used sampling technique can be found in Markov chain Monte Carlo (MCMC) methods \cite{chen_dimension-robust_2019, cotter_mcmc_2013, hastings_monte-carlo_1970, law_data_2015}. These methods work by constructing a Markov chain that, after a transient phase, asymptotically samples from the posterior distribution. However, as every individual sample requires a likelihood evaluation, and thus a forward solve of $\mathcal{F}$, the computational cost for a given amount of samples critically depends on the cost of evaluating the forward model. In our setting, each forward model evaluation involves solving the PDE approximately on a finite element grid. When using a high spatial resolution of the computational domain, few samples can be constructed. This results in poor Monte Carlo (MC) estimates of the posterior moments.

To reduce computational complexity, we will start from the multilevel MC methodology \cite{blondeel_p-refined_2020, giles_multilevel_2015, lovbak_multilevel_2021, van_barel_robust_2017}. Here, the problem is first solved on a coarse discretization level, where the forward problem is cheaper to evaluate, allowing many more samples to be generated. This coarse solution is subsequently corrected on finer discretization levels. Computational acceleration is achieved because the correction terms require far fewer samples than a direct approximation of the posterior distribution on the finest level. This methodology has been applied to a variety of cases, both in structural mechanics \cite{angelikopoulos_x-tmcmc_2015, blondeel_p-refined_2020} and in other domains \cite{dodwell_hierarchical_2015, latz_multilevel_2018}. 

The principle of multilevel MC has been used extensively in the development of novel methods for Bayesian inversion. Of particular note is the multilevel MCMC algorithm \cite{dodwell_hierarchical_2015}, which implements the multilevel approach in a Metropolis-Hastings algorithm. In recent years, several alternatives to multilevel MCMC have been proposed. Examples of these methods include multilevel sequential MC \cite{latz_multilevel_2018}, multilevel delayed acceptance \cite{christen_markov_2005, lykkegaard_multilevel_2023} and multilevel dimension-independent likelihood-informed MCMC \cite{cui_multilevel_2024}.

The main contribution of this paper is an adaptation of the multilevel MCMC algorithm \cite{dodwell_hierarchical_2015} for the case of high-resolution observations. In many cases, the data originates from a few discrete sensors and the number of observation points is far lower than the number of nodes in the FE grid, even on the coarsest level in the multilevel hierarchy. However, some imaging techniques, for example Digital Image Correlation \cite{helfrick_3d_2011} or optic fibres \cite{anastasopoulos_one-year_2021}, both used in the field of structural mechanics, may yield thousands of measurement points on a single structure. Incorporating this many observations leads to expensive likelihood evaluations on all levels, limiting the advantage of the multilevel approach.

The method we propose involves a level-dependent calculation of the likelihood, where only a weighted subset of the observations is considered at coarser levels. This reduction in observation points naturally leads to considerably cheaper coarse calculations. We give a theoretical analysis of this adapted method to show that similar, significant gains over the classical single-level MCMC approach can be obtained without loss of accuracy on the posterior estimates. 

The posterior estimates are constructed by modelling $\vartheta$ as a single realisation of a transformed Gaussian random field. There is a vast body of research on random field models for PDE inversion. Commonly studied are lognormal fields with applications in e.g. groundwater flow \cite{charrier_strong_2012,charrier_finite_2013,nobile_multi_2015,teckentrup_further_2013}, though other fields have been studied as well \cite{ali_multilevel_2017,blondeel_p-refined_2020,guth_generalized_2024}. We will focus on Lipschitz continuous transformations of Gaussian fields, which are of specific interest to model uniform and Gamma random fields with applications in structural mechanics \cite{blondeel_p-refined_2020}. We adapt the convergence analysis of \cite{charrier_strong_2012}, originally made for log-normal fields, to fit this situation.

This paper is structured as follows. in Section \ref{ch:beammodel} we outline the setup of the model and show how the plane stress problem fits into this formalism. In Section \ref{ch:mlmcmc} we give an overview of the multilevel MCMC algorithm and show how it can be adapted to include a level-dependent treatment of data. Next, in Section \ref{ch:conv_analysis}, we provide a theoretical convergence analysis of level-dependent data treatment as well as a relaxation of some assumptions made on the multilevel MCMC methodology. Finally, Section \ref{ch:numerics} contains a numerical showcase of the efficiency of the method for the clamped beam setup.

\section{Model setup}
\label{ch:beammodel}

In this Section, we outline the model problem from structural mechanics used to illustrate our MCMC method. In Subsection \ref{sec:forward}, we give an overview of the physical properties of the model problem under investigation. Next, in Subsection \ref{sec:highresobs}, we illustrate how high-resolution observations typically fit in this model. Finally, Subsection \ref{sec:KL} describes how the spatial variation of the stiffness is modelled by a Karhunen-Lo\`eve expansion.

\subsection{Forward problem}
\label{sec:forward}

Throughout this paper, we will consider a 2D forward problem $\mathcal{F}(\vartheta)$ that consists of solving the equations of linear elasticity in the plane stress case. In these equations, the parameter $\vartheta$ we seek is the spatially varying Young's modulus $E$ of the material throughout the domain. The Poisson's ratio $\tilde{\nu}$ is assumed to be constant and below the limit value of $\tilde{\nu} = 0.5$ where the material is incompressible. In order to write the linear elastic equations in their simplest forms, it is easiest to use  the Lam\'e parameters $\tilde{\lambda}$ and $\tilde{\mu}$, that relate to $\vartheta$ and $\tilde{\nu}$ as 

\begin{equation}
\tilde{\lambda} = \dfrac{\vartheta\tilde{\nu}}{(1+\tilde{\nu})(1-2\tilde{\nu})}, \qquad
\tilde{\mu} = \frac{\vartheta}{2(1+\tilde{\nu})}.
\end{equation}

We can thus view these as spatially varying functions $\tilde{\lambda},\tilde{\mu}:\mathbb{R}^2\to\mathbb{R}$. With the Lam\'e parameters, the equations of linear elasticity can be combined into one equation of the following form:

\begin{equation}
-\nabla \cdot \left[\tilde{\lambda}(\nabla\cdot u)I + \tilde{\mu}(\nabla u + (\nabla u)^T)\right] = F_{\text{body}}.
\label{eq:lin_elastic}
\end{equation}

In this equation, the term $F_{\text{body}}:\mathbb{R}^2\to \mathbb{R}^2$ contains the body forces acting in the system, and $I$ denotes the identity matrix. Equation \eqref{eq:lin_elastic} links the spatially varying Young's modulus to the displacement field $u:\mathbb{R}^2\to \mathbb{R}^2$. For practical calculations, we discretize this PDE on a finite element grid. We consider linear rectangular elements for the discretization of the underlying parameters \cite{francois_stabil_2021}.

We will solve the equations of linear elasticity in a relatively simple 2D model problem consisting of a clamped beam, as shown schematically in figure \ref{fig:beam}. A line load is place along the center third of the top edge of the beam. We assume that the concrete is linearly elastic and isotropic. This model allows damage to the structure to be modelled as a reduction of the Young's modulus. More detailed values of material parameters are given in Section \ref{ch:numerics}.

\begin{figure}
\centering
\resizebox{\textwidth}{!}{
\begin{tikzpicture}
%%%%% DAMAGED REGION
\fill[color={rgb,255:red,231; green,238; blue,242}] (0,0) rectangle (15, 3);
%%%%% GRID LINES AND CIRCLES
\foreach \x in {1,...,14}
	{\draw (\x,0) -- (\x,3);
	\fill[color={rgb,255:red,60; green,110; blue,58}] (\x, 0) circle (0.07);
	\fill[color={rgb,255:red,60; green,110; blue,58}] (\x, 3) circle (0.07);}
\foreach \y in {0.25,0.5,0.75,1,1.25,1.5,1.75,2,2.25,2.5,2.75}
	{\draw (0, \y) -- (15, \y);}
%%%%% MAIN RECTANGLE
\draw[color={rgb,255:red,60; green,110; blue,58}, line width=0.5mm] (0,0) rectangle (15, 3);
%%%%% CLAMPED BOUNDARIES
\draw[ultra thick] (0,-0.5) -- (0, 3.6);
\draw[ultra thick] (15,-0.5) -- (15, 3.6);
\foreach \y in {0,...,18}
	{\draw (-0.5, \y/5) -- (0, \y/5-0.5);
	\draw (15, \y/5) -- (15.5, \y/5-0.5);}
\draw (-0.5, -0.2) -- (-0.2, -0.5);
\draw (-0.5, -0.4) -- (-0.4, -0.5);
\draw (-0.3, 3.6) -- (0, 3.3);
\draw (-0.1, 3.6) -- (0, 3.5);
\draw (15, -0.2) -- (15.3, -0.5);
\draw (15, -0.4) -- (15.1, -0.5);
\draw (15.2, 3.6) -- (15.5, 3.3);
\draw (15.4, 3.6) -- (15.5, 3.5);
%%%%% DOWNWARD FORCE
\foreach \x in {10.5,...,19.5}
	{\draw[->, ultra thick] (\x/2, 3.6) -- (\x/2, 3);}
\end{tikzpicture}%
}
\caption{Beam clamped at both ends. The beam is discretized using rectangular finite elements, and observations are made at element nodes (green dots) along the green lines.}
\label{fig:beam}
\end{figure}
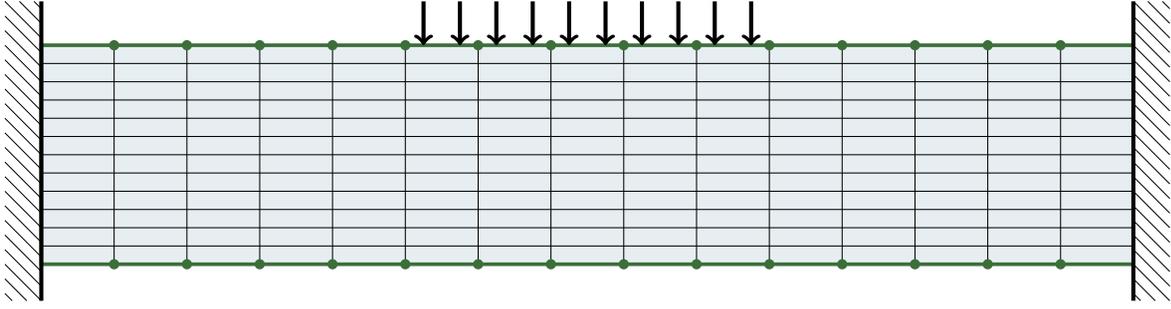

\subsection{High-resolution observations}
\label{sec:highresobs}

The output of the forward model consists of the 2D displacement field $u$ of the beam. We only consider the displacements along the edge of the beam, as shown by the green lines in figure \ref{fig:beam}. The observations are assumed to be made at very high resolution \cite{helfrick_3d_2011}, which is modelled by assuming that we have an observation at each element node along the edges of the beam. By constructing a finite element grid with a sufficient number of elements, this represents the physical case well. As such, we write
\begin{equation} 
\tilde{u} = \left\lbrace u(x_i, y_i)\right\rbrace_{(x_i, y_i)\in \mathcal{N}^{\text{grid}}},
\end{equation}
where $\mathcal{N}^{\text{grid}}$ is the set of finite element nodes along the edges of the beam, shown by green dots in figure \ref{fig:beam}. The observations $u^{\text{obs}}$ are then defined as $u^{\text{obs}} = \tilde{u} + \eta$, with noise $\eta$. For the purpose of this paper, the observations are treated as distinct, independent, normally distributed measurements such that $\eta \sim \mathcal{N}(0, \sigma_F^2I_{2N})$, where $\sigma_F^2$ is called the fidelity and $N = \#\mathcal{N}_{\text{grid}}$. As each observation point has a horizontal and vertical component of the displacement, which are assumed to have independent noise, there are $2N$ measurement errors. 
\begin{remark}
Observation techniques that yield very high-resolution data will typically show some spatial correlation in measurement errors. Exploiting these correlation structures through a more complex noise model might improve the efficiency of the multilevel algorithm \cite{simoen_prediction_2013}, however such a treatment is beyond the scope of the current paper.
\end{remark}

\subsection{Karhunen-Lo\`eve representation}
\label{sec:KL}

The inverse problem under study is interested in recovering the entire stiffness field. This will be modelled as a random field $a(x,\omega)$, where $x$ denotes the spatial and $\omega$ the stochastic dependence. The ground truth value is then defined as a realisation of $a$ for one specific value of $\omega$. There are many ways of modelling random fields \cite{liu_advances_2019}, but we will focus the Karhunen-Lo\`eve (KL) expansion, which is a well-established approach, particularly in the context of multilevel Monte Carlo methods \cite{charrier_strong_2012,charrier_finite_2013,teckentrup_further_2013}.

\subsubsection{KL expression of a Gaussian field}

One of the simplest random fields to model is a Gaussian field $g$ \cite{cliffe_multilevel_2011}. Here, the KL expansion takes the form 
\begin{equation}
g(x,\omega) = \mathbb{E}[g(\cdot,\omega)] + \sum_{m=1}^{\infty}\sqrt{\lambda_m}\xi_m(\omega)b_m(x).
\label{eq:normalfield}
\end{equation}
In this expression, $\{\lambda_m\}_{m\in\mathbb{N}}$ are the eigenvalues and $\{b_m\}_{m\in\mathbb{N}}$ the normalized eigenfunctions of the covariance kernel $C(x,y)$ of $g$, obtained by solving the eigenvalue problem \cite{blondeel_p-refined_2020}
\begin{equation}
\int_D C(x,y)b_m(y)dy = \lambda_m b_m(x).
\end{equation} 
The $\{\xi_m\}_{m\in\mathbb{N}}$ in equation \eqref{eq:normalfield} are a set of i.i.d. standard normal variables. In this setting, the problem of estimating $\vartheta$ becomes an estimation of the realisations of the $\{\xi_m\}_{m\in\mathbb{N}}$ that form the desired stiffness field. For practical calculations, we will consider a finite-dimensional approximation to $g$ by truncating the series after $M$ terms. We construct the KL terms using as computational domain the unit square $D = [0,1]\times [0,1]$, which can be easily mapped to the dimensions of the beam. We model the normal field using a Mat\'ern covariance kernel
\begin{equation}
\label{eq:covariance}
C(x,y) = \dfrac{\sigma^2}{2^{\nu -1}\Gamma(\nu)}\left(\sqrt{2\nu}\dfrac{\|x-y\|_2}{\lambda}\right)^{\nu}K_{\nu}\left(\sqrt{2\nu}\dfrac{\|x-y\|_2}{\lambda}\right),
\end{equation}
where $\sigma^2$ is the variance of the random field, $\lambda$ the correlation length scale, $\nu$ the smoothness parameter and $K_{\nu}$ the modified Bessel function of the second kind. 

\subsubsection{Transformation to other fields}

Non-Gaussian fields can be modelled by introducing a transformation of the Gaussian field, depending on the desired properties of the field. One common example for this is an exponential transformation of $g(x,\omega)$, resulting in a log-normal field \cite{charrier_strong_2012,charrier_finite_2013,dodwell_hierarchical_2015,teckentrup_further_2013}. In the context of linear elastic models in structural mechanics, another field that is regularly used is a Gamma random field \cite{ahmadian_regularisation_1998, lieven_finiteelement_2001, titurus_regularization_2008, weber_consistent_2009}, which can be obtained through the following transformation \cite{blondeel_p-refined_2020}
\begin{equation}
a(x,\omega) = \mu \gamma^{-1}\left[\kappa, \dfrac{\Gamma(\kappa)}{2}\left(1+\text{erf}\left(\dfrac{g(x,\omega)}{\sqrt{2}}\right)\right)\right],
\end{equation}
where $\Gamma$ denotes the Gamma function, $\gamma^{-1}$ the inverse of the lower incomplete gamma function and erf the error function. $\mu$ and $\kappa$ are the scale and shape parameters of the Gamma field, respectively. In order to obtain the desired convergence properties of the MCMC estimator, the random field $a$ must be sufficiently bounded away from zero. Unfortunately, this is not the case for (among others) the Gamma random field. A practical workaround is to instead consider an approximation of this field
\begin{equation}
a_{\varphi}(x,\omega) = \varphi\cdot\dfrac{\exp(g(x,\omega))}{1+\exp(g(x,\omega))} + \mu \gamma^{-1}\left[\kappa, \dfrac{\Gamma(\kappa)}{2}\left(1+\text{erf}\left(\dfrac{g(x,\omega)}{\sqrt{2}}\right)\right)\right],
\label{eq:transformation}
\end{equation}
for a well-chosen value of $\varphi > 0$. Probability density functions for a random variable defined in this way are shown in Figure \ref{fig:densities} for different values of $\varphi$. In Proposition \ref{thm:bound_afield}, we will prove that this approximation is indeed sufficiently bounded away from zero for a broad class of transformed fields $a$, for any $\varphi > 0$.

\begin{figure}
\centering
\includegraphics[width=\textwidth]{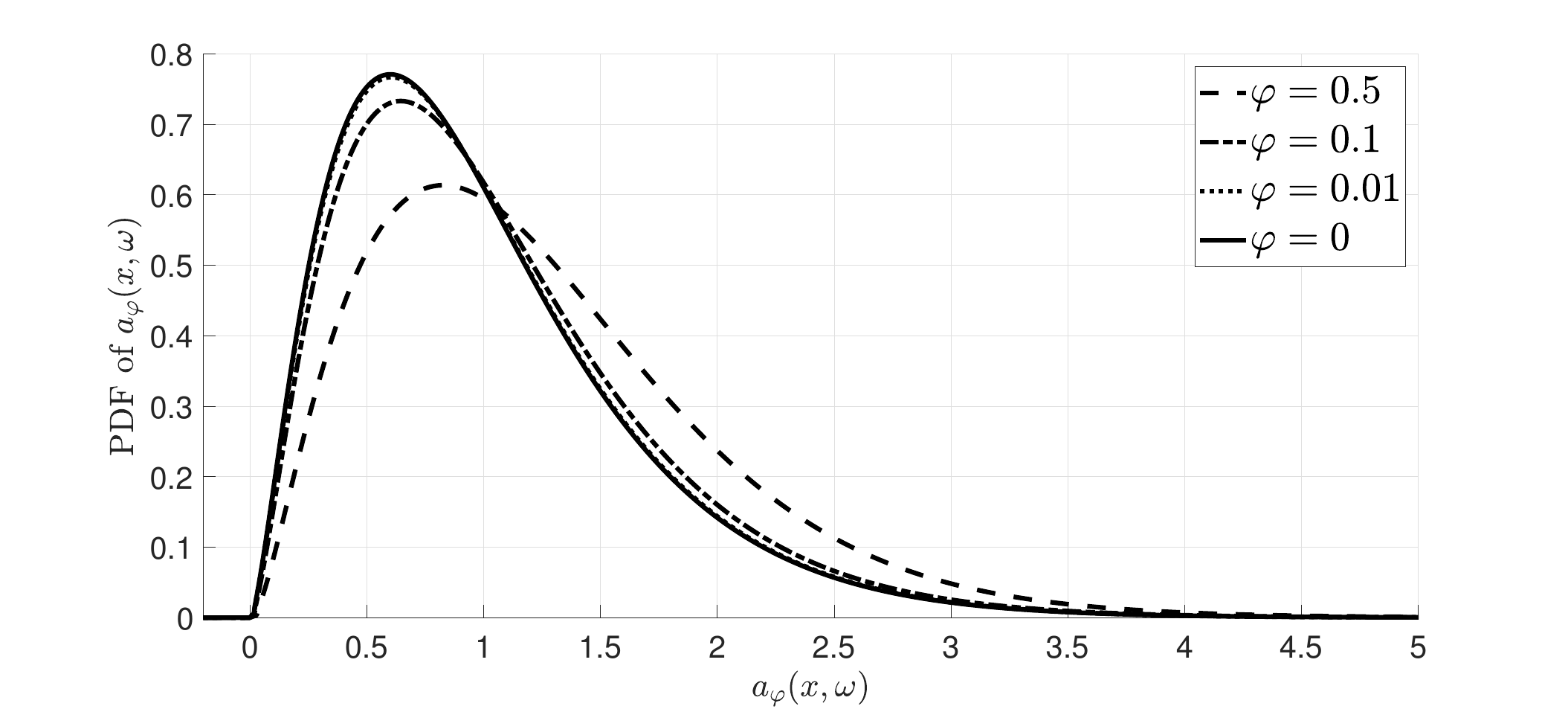}
\caption{Probability density functions of random variables constructed through the transformation in Equation \eqref{eq:transformation}, for several values of $\varphi$. Shape and scale parameters are chosen as $\kappa = 2.5$ and $\mu = 0.4$, respectively.}
\label{fig:densities}
\end{figure}

\section{Multilevel Markov chain Monte Carlo with full-field data}
\label{ch:mlmcmc}

In this section, we will first give an overview of the multilevel MCMC algorithm in Subsection \ref{sec:mlmcmc}, to introduce the basic concepts and notation. Next, in Subsection \ref{sec:leveldata}, we will show how the algorithm can be adapted to include a level-dependent treatment of data to enable the use of full-field measurement techniques. Finally, Subsection \ref{sec:comparison} contains some notes on a qualitative comparison between our method and other improved variants of multilevel MCMC.

\subsection{Multilevel MCMC}
\label{sec:mlmcmc}

Because the likelihood $\mathcal{L}(u^{\text{obs}}|\vartheta)$ encompasses information from the model output, it depends on the discretization level $R$ of the system. Similarly, the true underlying infinite-dimensional parameter, $\vartheta \in \mathbb{R}^{\mathbb{N}}$ can in practice only be approximated as a finite-dimensional parameter $\theta \in \mathbb{R}^M$. For clarity, we will include these discretization levels in the relevant terms and write the practical version of Bayes' theorem as
\begin{equation}
P_{M,R}(\theta | u^{\text{obs}}) \simeq \pi_M(\theta)\mathcal{L}_R(u^{\text{obs}}|\theta).
\end{equation}

\subsubsection{Underlying idea}
\label{subsec:underlying}

The multilevel Monte Carlo approach addresses the problem of prohibitively expensive forward models by exploiting a hierarchy of discretisation levels $\ell = 0,\ldots L$ \cite{giles_multilevel_2008, giles_multilevel_2015}. Throughout this paper, quantities that are level-dependent will be given a subscript $\ell$. In the simple setting discussed earlier, we assumed to have a parameter $\theta$ of dimension $M$, with the PDE of Equation \eqref{eq:lin_elastic} discretized on a finite element grid with discretization parameter $R$, representing the number of finite elements on the grid. As mentioned above, we will write these parameters as $M_{\ell}$ and $R_{\ell}$ respectively, as they can change between levels. The multilevel Monte Carlo methodology allows us to estimate moments of the posterior distributions $\nu_{M_{\ell}, R_{\ell}}$ for all values of $\ell$, which converge to the true underlying posterior as $M_{\ell}, R_{\ell} \to \infty$ \cite{teckentrup_further_2013}. In this context, $\{R_{\ell}\}_{\ell=0}^L$ is an increasing sequence, where we assume, for simplicity, that there exists an $s \geq 1$ such that
\begin{equation}
\label{eq:refinefactor}
R_{\ell} = sR_{\ell - 1}.
\end{equation}
Denoting by $d$ the physical dimension of the problem, a simple choice for the sequence $\{R_{\ell}\}_{\ell=0}^L$ is constructing it such that $s=2^d$, but other choices for $s$ can be made as well. Similarly, $\{M_{\ell}\}_{\ell=0}^L$ is also an increasing sequence. When we regard $M_{\ell}$ as the truncation number of the KL expansion on level $\ell$, this implies that $\{\theta_{\ell}\}_{\ell=0}^L$ is a nested sequence, i.e. $\theta_{\ell-1} \subseteq \theta_{\ell}$.

Having access to different discretisation levels, the multilevel Monte Carlo approach aims to find an estimator for a quantity of interest $Q$ by first constructing an approximate posterior estimate using a coarse approximation to the forward model, and then correcting this estimate using relatively few samples on finer levels. Using two different but correlated chains $\theta_{\ell}$ and $\Theta_{\ell-1}$, it does so by using a telescopic sum:
\begin{equation}
\label{eq:telescopic}
\widehat{Q}^{\text{MLMC}}(\theta) = \dfrac{1}{N_0}\sum_{n=B_0+1}^{B_0+N_0}Q_0(\theta^n_0) + \sum_{\ell=1}^L \dfrac{1}{N_{\ell}} \sum_{n = B_{\ell}+1}^{B_{\ell}+N_{\ell}} \left[Q_{\ell}(\theta^n_{\ell}) - Q_{\ell-1}(\Theta^n_{\ell-1})\right].
\end{equation}
The first term in this telescopic sum is a Monte Carlo estimate calculated with the classical Metropolis-Hastings algorithm \cite{hastings_monte-carlo_1970}, with the subscript 0 added to emphasize the fact that this estimate is calculated on the coarsest mesh. The later terms are Monte Carlo estimates for the corrections on the initial coarse estimate, calculated using a chain $\theta_{\ell}$ on level $\ell$ and a correlated chain $\Theta_{\ell-1}$ on the previous level $\ell -1$. In this notation, we will occasionally write $Y_{\ell} = Q_{\ell} - Q_{\ell-1}$. Each term in this sum is made using $N_{\ell}$ samples, as the first $B_{\ell}$ samples of each chain are discarded as burn-in. Every additional term in this sum contains evaluations of the forward model on a finer level, and as such fewer samples can be generated due to the increasing cost. However, the crucial observation is that as the discretization level $\ell$ increases, the estimates get closer to the true underlying value of $Q(\theta)$, so the magnitude of the correction terms decreases \cite{giles_multilevel_2015}. As such, fewer samples are needed to reach a target threshold of the Monte Carlo error for the later correction terms, leading to a cheap estimate of the correction terms on all levels and a significant decrease in cost over the single-level approach.

\begin{algorithm}
\caption{Multilevel Markov chain Monte Carlo}
\label{alg:mlmcmc}
\begin{algorithmic}
\STATE{Choose initial state $\theta_L^0 \in \mathbb{R}^{R_L}$. For every $\ell \in 0,\ldots, L$ the first $R_{\ell}$ modes of $\theta_L^0$ are the initial state $\theta_{\ell}^0$ on level $\ell$. Choose subsampling rates $\tau_{\ell}$ for $\ell=1,\ldots,L$. Set level iteration numbers $n_{\ell}=0$ for $\ell=0,\ldots,L-1$}
\FOR{$n_0 = 0,\ldots, N_0$}
\STATE{Generate pCN proposal move $\theta'_0 \sim q_0(\cdot|\theta_0^{n_0})$}
\STATE{Evaluate pCN acceptance criterion
\[
\alpha_{M_0,R_0} = \text{min}\left\lbrace 1, \dfrac{\mathcal{L}_0(u^{\text{obs}}|\theta_0')}{\mathcal{L}_0(u^{\text{obs}}|\theta_0^{n_0})}\right\rbrace
\]}
\STATE{With probability $\alpha_{M_0,R_0}$ accept proposal and set $\theta_0^{n_0+1} = \theta'_0$,}
\STATE{Otherwise reject proposal and set $\theta_0^{n_0+1} = \theta_0^{n_0}$}
\FOR{$\ell = 1,\ldots,L$}
\IF{$n_{\ell-1}\equiv 0 \mod \tau_{\ell}$}
\STATE{Generate proposal coarse modes by subsampling chain on level $\ell - 1$: $(\theta_{\ell}^C)' = \theta_{\ell-1}^{n_{\ell}\tau_{\ell}}$}
\STATE{Generate proposal fine modes using pCN proposal distribution: $(\theta_{\ell}^F)' \sim q_{\ell}(\cdot|\theta_{\ell}^{n_{\ell},F})$}
\STATE{Set $\theta_{\ell}' = [(\theta_{\ell}^C)', (\theta_{\ell}^F)']$ and calculate
\[
\alpha_{M_{\ell}, R_{\ell}} = \min\left\lbrace1, \dfrac{\mathcal{L}_{\ell}(u^{\text{obs}}|\theta_{\ell}')\mathcal{L}_{\ell-1}(u^{\text{obs}}|\theta_{\ell}^{n,C})}{\mathcal{L}_{\ell}(u^{\text{obs}}|\theta_{\ell}^n)\mathcal{L}_{\ell-1}(u^{\text{obs}}|(\theta_{\ell}^C)')}\right\rbrace
\]}
\STATE{With probability $\alpha_{M_{\ell},R_{\ell}}$ accept proposal and set $\theta_{\ell}^{n_{\ell}+1} = \theta_{\ell}'$,}
\STATE{Otherwise reject and set $\theta_{\ell}^{n_{\ell}+1} = \theta_{\ell}^{n_{\ell}}$}
\STATE{$n_{\ell} \leftarrow n_{\ell}+1$}
\ENDIF
\ENDFOR
\ENDFOR
\RETURN{Markov chains of elements $\{\theta^n_{\ell}\}$ at different discretisation levels}
\end{algorithmic}
\end{algorithm}

\subsubsection{Subsampling Markov chains}

The multilevel Markov chain Monte Carlo algorithm \cite{dodwell_hierarchical_2015} is used to generate the chains $\theta_{\ell}$ and $\Theta_{\ell-1}$ in order to consistently compare them and estimate the correction term on level $\ell$. In theory, the method requires independent samples from  $\nu_{M_{\ell-1}, R_{\ell-1}}$ to construct $\Theta_{\ell-1}$. Unfortunately it is impossible to generate these, but we do have access to a Markov chain of elements $\theta_{\ell-1}$ which, provided this chain has been carried out long enough, samples from $\nu_{M_{\ell-1}, R_{\ell-1}}$. The samples in this chain are correlated, but by subsampling the chain $\theta_{\ell-1}$ at a sufficiently high rate we get approximately uncorrelated samples that can be used to construct the chain $\Theta_{\ell-1}$. We thus define a subsampling rate $\tau_{\ell}$ on each level, and define each sample $\Theta_{\ell-1}^n$ as $\theta_{\ell-1}^{n\tau_{\ell}}$.

\subsubsection{Proposal moves}

Having access to the subsampled chain $\Theta_{\ell-1}$, we can use this to construct the finer level chain $\theta_{\ell}$. Recalling that $\{\theta_{\ell}\}_{\ell=0}^L$ is a nested sequence, we can split $\theta_{\ell}$ in two parts by writing $\theta_{\ell} = [\theta_{\ell}^C, \theta_{\ell}^F]$. $\theta_{\ell}^C$ contains the first $M_{\ell - 1}$ modes of $\theta_{\ell}$, which we call the coarse modes. $\theta_{\ell}^F$ contains the other $M_{\ell} - M_{\ell - 1}$ modes, and these are called the fine modes. For consistency, we define $M_{-1}=0$. Starting from a fine-level sample $\theta_{\ell}^n$, we can generate the next element of the chain on this level in a similar way as the Metropolis-Hastings algorithm: we first propose a new move, which is constructed as $\theta_{\ell}' = [\Theta_{\ell-1}^{n+1}, (\theta_{\ell}^F)']$. The coarse modes are thus proposed by subsampling the chain on the previous level as discussed above, while the fine modes are proposed using a classical proposal distribution $q$ centred at $\theta_{\ell}^{n, F}$. These coarse and fine modes together constitute the proposal move on level $\ell$. This proposal is then compared to an acceptance criterion, which is the multilevel equivalent of the Metropolis-Hastings acceptance criterion:
\begin{equation}
\alpha_{M_{\ell}, R_{\ell}} = \min\left\lbrace1, \dfrac{P_{M_{\ell}, R_{\ell}}(\theta_{\ell}')q_{\ell}(\theta_{\ell}^n|\theta_{\ell}')}{P_{M_{\ell}, R_{\ell}}(\theta_{\ell}^n) q_{\ell}(\theta_{\ell}'|\theta_{\ell}^n)}\right\rbrace,
\end{equation}
where $P_{M_{\ell}, R_{\ell}}$ is the posterior density on level $\ell$ and $q_{\ell}$ is the proposal move discussed above. 

As proposal for the fine modes, we will use the preconditioned Crank-Nicholson (pCN) proposal \cite{cotter_mcmc_2013}
\begin{equation}
q((\theta_{\ell}^F)'|\theta_{\ell}^{n,F}) = \theta_{\ell}^{n,F}\sqrt{1-\beta^2} + \beta\zeta,
\end{equation}
with $\beta \in [0, 1]$ and $\zeta \sim \mathcal{N}(0, \mathcal{C})$, where $\mathcal{C}$ is the prior covariance in the case of a normally distributed prior. The pCN proposal distribution is essentially dimension-independent and as such is very well suited for high-dimensional problems such as the one studied in this paper. When using the pCN proposal distribution for the fine modes of the proposal, the acceptance criterion can be written as \cite{dodwell_hierarchical_2015}
\begin{equation}
\alpha_{M_{\ell}, R_{\ell}} = \min\left\lbrace1, \dfrac{\mathcal{L}_{\ell}(u^{\text{obs}}|\theta_{\ell}')\mathcal{L}_{\ell-1}(u^{\text{obs}}|\theta_{\ell}^{n,C})}{\mathcal{L}_{\ell}(u^{\text{obs}}|\theta_{\ell}^n)\mathcal{L}_{\ell-1}(u^{\text{obs}}|(\theta_{\ell}^C)')}\right\rbrace.
\label{eq:ml_accept}
\end{equation}
The multilevel Markov chain Monte Carlo algorithm is summarized in algorithm \ref{alg:mlmcmc}.

\subsection{Incorporation of level-dependent data}
\label{sec:leveldata}

The level-dependent treatment of data we propose focuses specifically on the calculation of the likelihood function. For simplicity, we assume that the measurement errors are independent and $\mathcal{N}(0, \sigma_F^2I_{2N_L})$-distributed, where $N_L = \#\mathcal{N}_L^{\text{grid}}$ is large but finite. For practical purposes, this corresponds to constructing the finest level mesh in such a way that its nodes coincide with the observation points. In this setting, a single-level likelihood takes the form
\begin{equation}
\mathcal{L}(u^{\text{obs}}|\theta) \simeq \exp\left[\dfrac{-\|u^{\text{obs}} - F(\theta)\|^2}{2\sigma_F^2}\right].
\end{equation}
\begin{remark}
The above expression is ill-suited if we would consider the limit for infinite-dimensional data \cite{kahle_bayesian_2019,stuart_inverse_2010}. While a more general treatment of continuous (and thus, infinite-dimensional and spatially correlated) data is of particular interest to us, this is left for future work and the scope of the current paper will be limited to the case of high-, albeit finite-dimensional data. To fit inside the existing multilevel MCMC framework \cite{dodwell_hierarchical_2015}, the grid resolution $R_{\ell}$ and KL truncation $M_{\ell}$ will still be allowed to enter the asymptotic regime of $\ell \to \infty$ for the convergence analysis, but $N_L$ is considered to reach a maximum value at finite level $L$.
\end{remark}
For the multilevel approach, we assume to have a nested sequence of meshes, i.e. the mesh nodes on level $\ell - 1$ are a subset of the nodes on level $\ell$. The level-dependent treatment then consists of considering at each level a data vector consisting only of those measurement points that coincide with the mesh nodes at that level, either by selecting only the observations at those nodes or by applying a general dimension-reducing weighting function to all observations. To accentuate this level-dependent treatment, we will include a subscript $\ell$ in the notation of the data vector $u^{\text{obs}}$. This approach is schematically shown in Figure \ref{fig:datascheme}, next to a level-independent case with low-resolution data.

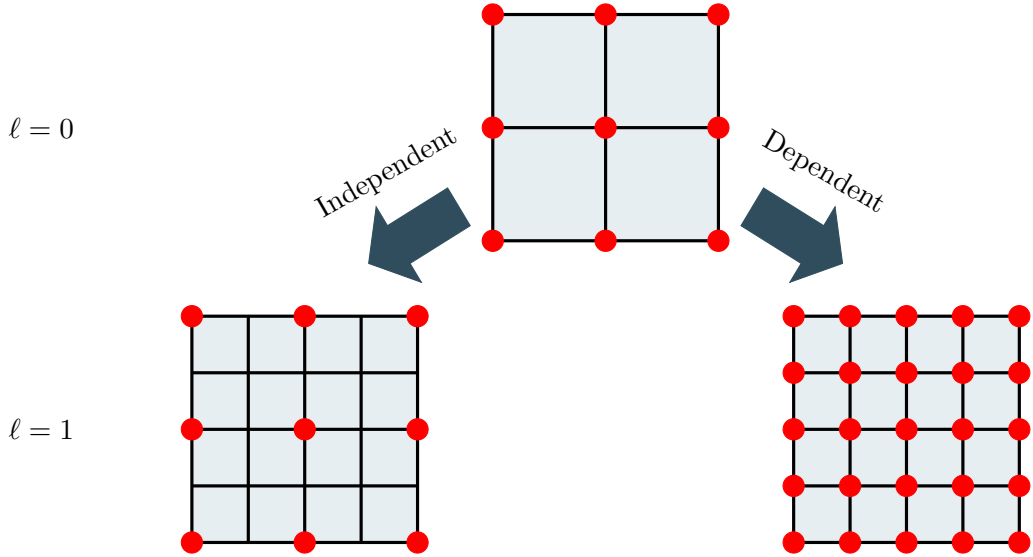
\begin{figure}
\flushleft
\hspace*{1mm}
\begin{tikzpicture}
%Rectangle backgrounds
\fill[color={rgb,255:red,231; green,238; blue,242}] (0,0) rectangle (3,3);
\fill[color={rgb,255:red,231; green,238; blue,242}] (8,0) rectangle (11,3);
\fill[color={rgb,255:red,231; green,238; blue,242}] (4,4) rectangle (7,7);
%Element lines
\draw[very thick] (0,0) rectangle (3,3);
\draw[very thick] (8,0) rectangle (11,3);
\draw[very thick] (4,4) rectangle (7,7);
\draw[very thick] (5.5,4) -- (5.5,7);
\draw[very thick] (4,5.5) -- (7,5.5);
\draw[very thick] (0.75,0) -- (0.75,3);
\draw[very thick] (1.5,0) -- (1.5,3);
\draw[very thick] (2.25,0) -- (2.25,3);
\draw[very thick] (0,0.75) -- (3,0.75);
\draw[very thick] (0,1.5) -- (3,1.5);
\draw[very thick] (0,2.25) -- (3,2.25);
\draw[very thick] (8.75,0) -- (8.75,3);
\draw[very thick] (9.5,0) -- (9.5,3);
\draw[very thick] (10.25,0) -- (10.25,3);
\draw[very thick] (8,0.75) -- (11,0.75);
\draw[very thick] (8,1.5) -- (11,1.5);
\draw[very thick] (8,2.25) -- (11,2.25);
%Data points
\foreach \x in {4,5.5,7}
	\foreach \y in {4,5.5,7}
		\filldraw [red] (\x, \y) circle (4pt);
\foreach \x in {0,1.5,3}
	\foreach \y in {0,1.5,3}
		\filldraw [red] (\x, \y) circle (4pt);
\foreach \x in {8,8.75,9.5,10.25,11}
	\foreach \y in {0,0.75,1.5,2.25,3}
		\filldraw [red] (\x, \y) circle (4pt);
%Left arrow
\filldraw[color={rgb,255:red,47; green,77; blue,93}] (3.4,4.7) -- (3.7,4.2) -- (2.9, 3.7) -- (2.6, 4.2) -- cycle; 
\filldraw[color={rgb,255:red,47; green,77; blue,93}] (2.45,4.45) -- (3.05,3.45) -- (2.35,3.7) -- cycle; 
%Right arrow
\filldraw[color={rgb,255:red,47; green,77; blue,93}] (7.6,4.7) -- (7.3,4.2) -- (8.1, 3.7) -- (8.4, 4.2) -- cycle; 
\filldraw[color={rgb,255:red,47; green,77; blue,93}] (8.55,4.45) -- (7.95,3.45) -- (8.65,3.7) -- cycle; 
%Text
\node[rotate=30] at (2.6,4.9) {Independent};
\node[rotate=330] at (8.4,4.9) {Dependent};
\node at (-2, 5.5) {$\ell = 0$};
\node at (-2, 1.5) {$\ell = 1$};
\end{tikzpicture}
\caption{Two-level sketch of the difference between classical level-independent treatment of data used for low-resolution observations and level-dependent treatment of data for high-resolution observations. Red dots indicate observations used in each case.}
\label{fig:datascheme}
\end{figure}

In order to accommodate for the increasing number of observations used between levels in the calculation of the likelihood, a weighting term should be included in the expression of the likelihood such that convergence of the vector norms of elements on increasingly fine levels is achieved. This will ensure a consistent cross-level comparison between likelihood scores, which is of vital importance as the multilevel acceptance criterion in Equation \eqref{eq:ml_accept} contains likelihood evaluations at both levels $\ell$ and $\ell - 1$.

For the level-dependent likelihood function, not only are $\theta$ and the discretized forward model $F$ level-dependent, but we also now consider the level-dependent nature of the data. In order to account for the increase in observations considered at finer levels, we thus weigh the likelihood accordingly:
\begin{equation}
\mathcal{L}_{\ell}(u_{\ell}^{\text{obs}}|\theta_{\ell}) \simeq \exp\left[\dfrac{-\|W_{\ell}(u^{\text{obs}}) - F_{\ell}(\theta_{\ell})\|^2}{2N_{\ell}\sigma_F^2}\right],
\label{eq:levellike}
\end{equation}
with $N_{\ell}$ the amount of observation points used at each level. Here $W_{\ell}: \mathbb{R}^{N_L}\to\mathbb{R}^{N_{\ell}}$ is a weighting function that maps all observations to the nodes used on the mesh at level $\ell$. In the case where we simply select only those observations coinciding with the coarser mesh nodes, we write $W_{\ell} = W_{\ell}^S$. For consistency, we additionally define $W_{\ell}$ to be the identity function on $\mathbb{R}^{N_L}$ for $\ell \geq L$.
\begin{remark}
The inclusion of $N_{\ell}$ in Equation \eqref{eq:levellike} is necessary for consistency of the level-dependent likelihood expressions. Effectively, it treats the observations as if their noise level is lower at coarse levels and higher at fine levels. This can be seen as considering the observation on the coarse-level node to be a local average of all the neighbouring observations on the finest level. With an appropriate choice of $W_{\ell}$, being an actual local average, this interpretation can be made literal.
\end{remark}

\subsection{Some thoughts on comparison to improved multilevel MCMC methods}
\label{sec:comparison}
The study in this paper is concerned with an extension of the multilevel MCMC algorithm \cite{dodwell_hierarchical_2015} to include high-resolution data. In recent years, many alternative, improved methods have been developed that combine the ideas of multilevel Monte Carlo with MCMC approaches. Most of these methods focus on improving the sampling efficiency, i.e. a reduction of the required number of samples at the finest level. Two notable examples are the multilevel Delayed Acceptance \cite{lykkegaard_multilevel_2023} and multilevel Dimension-Independent Likelihood-Informed MCMC \cite{cui_multilevel_2024} algorithms. Our method focuses on an efficient likelihood evaluation, i.e. a reduction of the cost per sample, in situations with high-resolution data. In Sections \ref{ch:conv_analysis} and \ref{ch:numerics} our method is only compared to the standard multilevel MCMC algorithm. %No quantitative comparison to the alternative methods is provided, as their cost-per-sample will suffer similarly to the multilevel MCMC algorithm due to the increased likelihood cost of comparing high-dimensional outputs at the coarsest levels, rendering a direct comparison obsolete.
As the novelty of our approach consists of selecting weighted subsets of observations to construct approximate likelihoods at coarser levels, we expect our suggested strategy to be straightforwardly applicable in those methods as well, with similar gains.

\section{Convergence analysis}
\label{ch:conv_analysis}

The multilevel MCMC algorithm has been shown to have a considerably lower computational cost than classical single-level methods \cite{dodwell_hierarchical_2015}. In this Section, we will adapt the existing convergence analysis to the specifics of our method. In Subsection \ref{seq:analysis_field}, we adapt existing analyses for strong convergence of an elliptic PDE solution based on truncated KL expansions of the underlying random field \cite{charrier_strong_2012,charrier_finite_2013,teckentrup_further_2013} to our framework. Next, in Subsection \ref{seq:analysis_data}, we will show that the bound on the $\varepsilon$-cost of the multilevel MCMC algorithm obtained by \cite{dodwell_hierarchical_2015} can be generalised to a level-dependent treatment of data and that a similar bound on the $\varepsilon$-cost holds for more general random fields. For brevity, we will not repeat the full convergence analysis, but rather only show how the existing propositions can be altered where necessary.

\subsection{Strong convergence of PDE solution}
\label{seq:analysis_field}

The bound on the $\varepsilon$-cost of the multilevel MCMC algorithm depends on two aspects of the model approximation, being the truncation of the KL expansion after a finite number of terms and the finite element discretisation. The effect of the latter is a well-studied property of the multilevel Monte Carlo methodology, and the existing analyses of this carry over to our case without any alteration needed \cite{charrier_finite_2013,teckentrup_further_2013}. To study the effect of truncating the KL expansion, the convergence analysis of \cite{charrier_strong_2012} will be tweaked to show that similar convergence properties hold for transformations such as in Equation \eqref{eq:transformation} (for the case of a Gamma random field), for any $\varphi > 0$. The analysis is made in terms of the Sobolev regularity $k$ corresponding to the Sobolev space $H^k$. This includes Mat\'ern covariances, for which $k = 2\nu + d$ \cite{bachmayr_representations_2016,graham_quasi-monte_2015,lord_introduction_2014}.

\subsubsection{Technical lemmas}

In order to prove the strong convergence of the PDE solution, we require the following assumptions on the eigenpairs $(\lambda_m, b_m)_{m\geq 1}$ of the covariance operator of the Gaussian field:
\begin{enumerate}[label=B\arabic*.]
\item The eigenfunctions $b_m$ are continuously differentiable
\item The series $\sum_{m\geq 1}\lambda_m\|b_m\|^2_{L^{\infty}(D)}$ is convergent
\item There exists a $\beta \in (0,1)$ such that the series $\sum_{m\geq 1}\lambda_m\|b_m\|^{2(1-\beta)}_{L^{\infty}(D)}\|\nabla b_m\|^{2\beta}_{L^{\infty}(D)}$ is convergent
\end{enumerate}

The analysis of \cite{charrier_strong_2012} shows that these assumptions hold in the case of an exponential two-point covariance function if $\beta < 1/2$, or for arbitrary $\beta \in (0,1)$ in the case of a general analytic covariance function. The following two classical results show that the requirement of analyticity can be relaxed to finite Sobolev regularity for any $\beta \in (0,1)$.

\begin{lemma}
\label{thm:bound_eigvals}
Let $C$ be symmetric and in $H^{k,0}(D^2) = H^{k}(D)\otimes H^0(D)$ for any $k \geq 0$. Denote by $(\lambda_m)_{m\geq 1}$ the sequence of eigenvalues of the covariance operator associated with $C$. Then it holds that
\[
0 \leq \lambda_m \lesssim m^{-k/d} \quad \forall m \geq 1.
\]
\end{lemma}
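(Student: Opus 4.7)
The plan is to exploit the singular-value / best-approximation characterization of the eigenvalues of the covariance operator $\mathcal{C}:L^2(D)\to L^2(D)$, $(\mathcal{C}f)(x)=\int_D C(x,y)f(y)\,dy$. First, the non-negativity $\lambda_m\geq 0$ is immediate: since $C$ is a covariance kernel, it is symmetric and positive semi-definite, and thus $\mathcal{C}$ is a self-adjoint positive operator on $L^2(D)$ whose spectrum lies in $[0,\infty)$. Moreover, $H^{k,0}(D^2)\subset L^2(D^2)$, so $\mathcal{C}$ is Hilbert-Schmidt and in particular compact, so its eigenvalues can be arranged in a non-increasing sequence tending to zero.

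For the decay bound, I would use the Schmidt-Eckart-Young identity: since $\mathcal{C}$ is self-adjoint and positive, its $m$-th eigenvalue coincides with its $m$-th singular value, which in turn equals the best rank-$(m{-}1)$ approximation error in operator norm, giving
\begin{equation*}
\lambda_{N+1}=\inf\bigl\{\|\mathcal{C}-T\|_{L^2\to L^2}\;:\;T\text{ has rank at most }N\bigr\}.
\end{equation*}
It therefore suffices to construct a convenient rank-$N$ approximation. The idea is to project the kernel in its first argument onto a well-chosen $N$-dimensional subspace $V_N\subset L^2(D)$ which delivers the standard Sobolev approximation rate
\begin{equation*}
\inf_{h\in V_N}\|g-h\|_{L^2(D)}\leq c\,N^{-k/d}\,\|g\|_{H^k(D)}\qquad\forall g\in H^k(D),
\end{equation*}
e.g.\ via a tensor-product Fourier basis, a quasi-uniform FEM space of degree $\geq k-1$, or a spline basis on $D$.

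Denoting by $P_N$ the $L^2(D)$-orthogonal projection onto $V_N$ and defining $\mathcal{C}_N$ to be the integral operator with kernel $C_N(x,y):=(P_N C(\cdot,y))(x)$, the operator $\mathcal{C}_N$ has rank at most $N$, and applying the approximation estimate pointwise in $y$ and integrating yields
\begin{equation*}
\|\mathcal{C}-\mathcal{C}_N\|_{\mathrm{HS}}^2=\int_D\|C(\cdot,y)-P_N C(\cdot,y)\|_{L^2(D)}^2\,dy\leq c^2 N^{-2k/d}\int_D\|C(\cdot,y)\|_{H^k(D)}^2\,dy=c^2 N^{-2k/d}\|C\|_{H^{k,0}(D^2)}^2.
\end{equation*}
Since the operator norm is bounded by the Hilbert-Schmidt norm, the variational principle above gives $\lambda_{N+1}\leq c\|C\|_{H^{k,0}(D^2)}N^{-k/d}$, and relabelling $m=N+1$ yields the claimed rate.

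The main obstacle is essentially bookkeeping rather than a deep difficulty: one must ensure that the asymmetric Sobolev approximation estimate is actually available on the concrete domain $D$, which requires enough regularity of $\partial D$ (typically Lipschitz, so that a bounded extension $H^k(D)\to H^k(\mathbb{R}^d)$ exists) and that $C(\cdot,y)$ genuinely lies in $H^k(D)$ for a.e.\ $y$, which is precisely the Fubini-type identification $H^{k,0}(D^2)\cong L^2(D_y;H^k(D_x))$. Once these are granted, the estimate is a classical consequence of spectral theory for integral operators with Sobolev-regular kernels, and nothing further is needed.
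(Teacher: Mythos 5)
Your proposal is correct: the paper itself gives no argument for this lemma, deferring entirely to the cited references (Frauenfelder et al., Bachmayr et al., Lord et al.), and your proof is essentially a self-contained reconstruction of the standard argument used there --- non-negativity and compactness from positive semi-definiteness and the Hilbert--Schmidt property, then the Schmidt--Eckart--Young variational characterization $\lambda_{N+1}=\inf\{\|\mathcal{C}-T\|:\operatorname{rank} T\leq N\}$ combined with a rank-$N$ kernel approximation built from an $L^2$-projection onto a subspace with the Sobolev rate $N^{-k/d}$, using the identification $H^{k,0}(D^2)\cong L^2(D;H^k(D))$. The caveats you flag (Lipschitz boundary for the approximation estimate, Bochner-space bookkeeping, and fractional $k$ handled by interpolation) are exactly the right ones and are unproblematic here since $D$ is the unit square.
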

\begin{proof}
The proof can be found e.g. at \cite[Proposition 2.5]{frauenfelder_finite_2005}, or \cite[Theorem 3.1]{bachmayr_representations_2016} or \cite[Theorem 7.60]{lord_introduction_2014} in terms of the Mat\'ern smoothness parameter.
\end{proof}

\begin{lemma}
\label{thm:bound_eigfuns}
Let $C$ be symmetric, continuously differentiable and in $H^{3,0}(D^2)$. Denote by $(\lambda_m,b_m)_{m\geq 1}$ the sequence of eigenpairs of the covariance operator associated with $C$ such that $\|b_m\|_{L^2(D)}=1$ for all $m\geq 1$. Then for any $s > 0$ it holds that
\begin{align*}
\|b_m\|_{L^{\infty}(D)} &\lesssim |\lambda_m|^{-s},
\\
\|\nabla b_m\|_{L^{\infty}(D)} &\lesssim |\lambda_m|^{-s}.
\end{align*}
\end{lemma}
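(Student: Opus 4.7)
The plan is to transfer Sobolev regularity of $C$ to Sobolev regularity of $b_m$ via the integral eigenvalue equation
\[
\lambda_m b_m(x) \;=\; \int_D C(x,y)\,b_m(y)\,dy,
\]
and then recover the pointwise bounds through Sobolev embedding.

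First I would differentiate this equation in $x$ up to order three. Because $C\in H^{3,0}(D^2)$ ensures $\partial_x^\alpha C(\cdot,\cdot)\in L^2(D\times D)$ for every $|\alpha|\le 3$, Cauchy--Schwarz in $y$ combined with the normalisation $\|b_m\|_{L^2(D)}=1$ yields
\[
\|b_m\|_{H^k(D)} \;\le\; \lambda_m^{-1}\,\|C\|_{H^{k,0}(D^2)}, \qquad k=0,1,2,3.
\]
Interpolating between $\|b_m\|_{L^2(D)}=1$ and $\|b_m\|_{H^3(D)}\lesssim\lambda_m^{-1}$ produces $\|b_m\|_{H^\sigma(D)}\lesssim \lambda_m^{-\sigma/3}$ for $\sigma\in[0,3]$, and the Sobolev embeddings $H^\sigma(D)\hookrightarrow L^\infty(D)$ for $\sigma>d/2$ and $H^\sigma(D)\hookrightarrow W^{1,\infty}(D)$ for $\sigma>1+d/2$ translate the Sobolev bound into the pointwise form claimed, with a concrete exponent.

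To obtain an exponent as small as any prescribed $s>0$, I would rerun the same bootstrap at higher Sobolev order, exploiting that the Mat\'ern covariance \eqref{eq:covariance} actually lies in $H^{k,0}(D^2)$ for every $k<2\nu+d$, well beyond the minimal $H^{3,0}$ displayed in the hypothesis. Differentiating the eigenvalue equation up to order $k$ gives $\|b_m\|_{H^k}\lesssim \lambda_m^{-1}$, and the same interpolation--embedding step then yields $\|b_m\|_{L^\infty},\|\nabla b_m\|_{L^\infty}\lesssim \lambda_m^{-t}$ with $t$ of order $(1+d/2)/k$, which can be driven below any $s>0$ by taking $k$ large enough. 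The hypothesis $C\in H^{3,0}$ in the lemma is then read as the minimal regularity needed for the gradient embedding $H^3\hookrightarrow C^1$ in $d=2$ to be available on its own, while the ``any $s>0$'' conclusion draws on the higher regularity of the underlying Mat\'ern kernel.

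The main obstacle will be ensuring the constants in the Sobolev interpolation inequality $\|b_m\|_{H^\sigma}\lesssim \|b_m\|_{L^2}^{1-\sigma/k}\|b_m\|_{H^k}^{\sigma/k}$ and in the Sobolev embedding are independent of the eigenfunction index $m$, and verifying the $H^{k,0}$-regularity of the Mat\'ern kernel for the required $k$, so that the $\lesssim$ in the final statement genuinely hides a finite constant for each fixed $s>0$ and the argument does not silently pick up an $m$-dependent multiplicative factor through the interpolation.
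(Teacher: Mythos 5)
Your route is, at its core, the same one the paper takes: the paper invokes \cite[Proposition A.2]{schwab_karhunenloeve_2006} to get $b_m \in H^3(D)$ (that proposition is proved by exactly your differentiate-the-eigenvalue-equation plus Cauchy--Schwarz step, giving $\|b_m\|_{H^3}\lesssim \lambda_m^{-1}$), then applies the Ehrling--Nirenberg--Gagliardo interpolation inequality, and finally defers to the iteration in \cite[Theorem 2.24]{schwab_karhunenloeve_2006} to convert Sobolev control into negative powers of $\lambda_m$. Your bootstrap--interpolate--embed plan is that same argument written out in full, and the fixed-exponent conclusion it yields under $H^{3,0}$ alone, namely $\|b_m\|_{L^{\infty}}\lesssim\lambda_m^{-(d/2+\epsilon)/3}$ and $\|\nabla b_m\|_{L^{\infty}}\lesssim\lambda_m^{-(1+d/2+\epsilon)/3}$, is correct; your worry about $m$-dependence of constants is unfounded, since the interpolation and embedding constants depend only on $D$ and the exponents.

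The genuine gap is your final step, where you claim the exponent $(1+d/2)/k$ ``can be driven below any $s>0$ by taking $k$ large enough.'' You cannot take $k$ large: as you yourself note, a Mat\'ern kernel of smoothness $\nu$ lies in $H^{k,0}(D^2)$ only for $k$ up to roughly $2\nu+d$, so for a fixed kernel the smallest exponent your argument produces is about $(1+d/2)/(2\nu+d)$, a fixed positive number. Arbitrary $s>0$ genuinely requires a kernel lying in $H^{k,0}$ for \emph{every} $k$, i.e.\ the $C^{\infty}$ hypothesis of \cite[Theorem 2.24]{schwab_karhunenloeve_2006}; with finite smoothness there is no room to send $k\to\infty$, and your proposal is internally inconsistent on this point. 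In fairness, the paper's own proof shares this defect: its ENG step only gives $\|\partial^{\kappa}b_m\|_{L^{\infty}}\lesssim\|b_m\|_{H^3}\lesssim\lambda_m^{-1}$, and arguing ``analogously to'' a theorem whose mechanism needs unbounded smoothness cannot manufacture arbitrarily small exponents from $H^{3,0}$ regularity. Indeed the statement itself appears too strong: for a stationary kernel of finite smoothness $k$ the high-index eigenfunctions oscillate with frequency $\sim m^{1/d}\sim\lambda_m^{-1/k}$ (as seen explicitly for the exponential kernel, and expected from spectral asymptotics for Mat\'ern kernels such as $\nu=3/2$, $d=1$, which satisfies all stated hypotheses), so $\|\nabla b_m\|_{L^{\infty}}\gtrsim\lambda_m^{-1/k}$ and the claimed bound must fail for $s<1/k$. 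The defensible result---and the one your write-up actually proves---is the fixed-exponent version; the ``for any $s>0$'' form should be reserved for $C^{\infty}$ kernels, and the downstream rate in Theorem \ref{thm:bound_pdesol} would need to be re-derived with the fixed exponents.
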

\begin{proof}
Because $C \in H^{3,0}(D^2)$, it follows from \cite[Proposition A.2]{schwab_karhunenloeve_2006} that $b_m \in H^3(D)$ for all $m \geq 1$. Let $\kappa$ be any multiindex such that $|\kappa| \in \{0,1\}$. Then it follows from the Ehrling-Nirenberg-Gagliardo inequality \cite[Lemma 3.1]{jiang_global_2014} that
\begin{align*}
\|\partial^{\kappa}b_m\|_{L^{\infty}(D)} &\lesssim \|\partial^{\kappa}b_m\|^{1/2}_{H^2(D)}\|\partial^{\kappa}b_m\|_{L^2(D)}^{1/2}
\\
&\lesssim \|\partial^{\kappa}b_m\|_{H^2(D)}
\\
&\lesssim \|b_m\|_{H^3(D)}.
\end{align*}
The result can then be obtained analogously to the proof of \cite[Theorem 2.24]{schwab_karhunenloeve_2006}.
\end{proof}
\begin{remark}
The above lemma is in fact a special case of \cite[Theorem 2.24]{schwab_karhunenloeve_2006}, where the same bound is obtained for arbitrarily high orders of derivatives of $b_m$ in the case $C \in \mathcal{C}^{\infty}$. When only bounds on $b_m$ and its gradient are required, the smoothness requirement can be relaxed to $C \in H^{3,0}$, which enables the use of Mat\'ern covariances with $\nu \geq 1$.
\end{remark}

\subsubsection{$L^p$-convergence for non-lognormal fields}

The analysis of \cite{charrier_strong_2012} shows that under assumptions $B1-B3$ a log-normal field based on a truncated KL expansion of a Gaussian field converges in $L^p$-sense to a log-normal field based on the true Gaussian field, and this analysis was extended by \cite{guth_generalized_2024} to a more general setting. The next proposition shows that it can additionally be extended to Lipschitz continuous transformations of Gaussian fields that are approximated by the $\varphi$-term shown in Equation \eqref{eq:transformation}.

\begin{proposition}
\label{thm:bound_afield}
Let $a$ be a Lipschitz continuous transformation of a Gaussian field $g$, with Lipschitz constant $K \geq 0$, and let $g_M$ be the approximation of $g$ obtained by truncating its KL expansion after $M$ terms. Denote
\begin{align*}
a_{\varphi}(x,\omega) &= \varphi\cdot\dfrac{\exp(g(x,\omega))}{1+\exp(g(x,\omega))} + a(g(x,\omega)),
\\
a_{\varphi,M}(x,\omega) &= \varphi\cdot\dfrac{\exp(g_M(x,\omega))}{1+\exp(g_M(x,\omega))} + a(g_M(x,\omega)).
\end{align*}
Assume further that assumptions $B1-B3$ are satisfied and denote
\[
R_M^{\beta} = \max\left(\sum_{m > M}\lambda_m\|b_m\|_{L^{\infty}(D)}^2, \ \sum_{m > M}\lambda_m\|b_m\|_{L^{\infty}(D)}^{2(1-\beta)}\|\nabla b_m\|_{L^{\infty}(D)}^{2\beta}\right).
\]
Then for any fixed $\omega$, fixed $\varphi, p > 0$ and $\beta \in (0,1)$, the following inequality holds for any $M \in \mathbb{N}$
\[
\|a_{\varphi,M} - a_{\varphi}\|_{L^p(\mathcal{C}^0(\overline{D}), \Omega)} \lesssim (R_M^{\beta})^{1/2},
\]
up to a constant that does not depend on $M$.
\end{proposition}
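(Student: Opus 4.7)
The plan is to reduce the claim to the corresponding strong-convergence bound for the underlying Gaussian field, as already established in \cite{charrier_strong_2012}. The key observation is that both summands in $a_\varphi$ are Lipschitz-continuous functions of the pointwise value $g(x,\omega)$. Introduce the map $h:\mathbb{R}\to\mathbb{R}$, $h(t) := \varphi\,e^t/(1+e^t) + a(t)$, so that $a_\varphi = h\circ g$ and $a_{\varphi,M} = h\circ g_M$. The sigmoid $\sigma(t) = e^t/(1+e^t)$ satisfies $\sigma'(t) = e^t/(1+e^t)^2 \le 1/4$ uniformly on $\mathbb{R}$, so $\varphi\sigma$ is Lipschitz with constant $\varphi/4$; combined with the Lipschitz assumption on $a$ this makes $h$ globally Lipschitz with constant $L := K + \varphi/4$, independent of $M$.

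Applying this pointwise for every $\omega\in\Omega$ and $x\in\overline{D}$ gives $|a_{\varphi,M}(x,\omega)-a_\varphi(x,\omega)| \le L\,|g_M(x,\omega)-g(x,\omega)|$. Taking the supremum over $x$ (which preserves the inequality since it holds pointwise) and then the $L^p(\Omega)$-norm yields
\[
\|a_{\varphi,M} - a_\varphi\|_{L^p(\mathcal{C}^0(\overline{D}),\Omega)} \;\le\; L\,\|g_M - g\|_{L^p(\mathcal{C}^0(\overline{D}),\Omega)}.
\]

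It therefore suffices to show that $\|g_M - g\|_{L^p(\mathcal{C}^0(\overline{D}),\Omega)} \lesssim (R_M^\beta)^{1/2}$, which is a purely Gaussian statement independent of the transformation. Under B1--B3, the centred Gaussian field $g - g_M$ has covariance $\sum_{m>M}\lambda_m b_m(x)b_m(y)$; its pointwise variance is bounded by the first term of $R_M^\beta$, while interpolating $b_m$ between its $L^\infty$-norm and its gradient with exponent $\beta$ controls the Hölder increments by the second term. A standard Kolmogorov--Chentsov argument combined with the Gaussian tail (as carried out in \cite{charrier_strong_2012} for the log-normal setting) then lifts these second-moment bounds to an $L^p(\mathcal{C}^0)$-bound of the required order.

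The main (mild) obstacle is to appreciate \emph{why} the proof here is substantially simpler than in \cite{charrier_strong_2012}: in the log-normal case the transformation $t\mapsto e^t$ is only locally Lipschitz, which forces a detour through Fernique's theorem to control exponential moments of the Gaussian supremum. In the present setting, the global Lipschitz property of both $a$ and the regularising sigmoid turns the reduction into a single pointwise inequality, so that the rate is inherited directly from the Gaussian approximation bound without any additional moment analysis. The role of the $\varphi$-term here is purely to supply a strict positive lower bound later on; in this proposition it contributes only the constant $\varphi/4$ to $L$ and is otherwise inert.
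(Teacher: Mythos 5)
Your proof is correct, and it takes a genuinely different (and cleaner) route than the paper's. Both arguments ultimately rest on the same external ingredient, namely the Gaussian convergence bound $\|g_M - g\|_{L^q(\mathcal{C}^0(\overline{D}),\Omega)} \lesssim (R_M^{\beta})^{1/2}$ of \cite[Proposition 3.4]{charrier_strong_2012}, but they differ in how the transformation is handled. The paper does \emph{not} exploit the global Lipschitz property of the sigmoid: it bounds the sigmoid increment crudely by $\varphi|e^x - e^y|$, which yields the pointwise estimate $|\tilde{a}_{\varphi}(x)-\tilde{a}_{\varphi}(y)| \leq |x-y|\left(K + \varphi e^x + \varphi e^y\right)$, i.e.\ a ``local Lipschitz constant'' depending on the field values. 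This forces a H\"older step with exponents $\frac{1}{p}=\frac{1}{q}+\frac{1}{r}$ and two additional inputs from \cite{charrier_strong_2012}: the uniform-in-$M$ exponential moment bound $\|e^{g_M}\|_{L^r} \leq D_r$ (Proposition 3.10 there) and $e^{g} \in L^r(\Omega)$ (Proposition 2.3 there) --- exactly the Fernique-type machinery you correctly identify as superfluous in this setting. Your observation that $h = \varphi\sigma + a$ is globally Lipschitz with constant $K + \varphi/4$ collapses all of this into a single pointwise inequality followed by the cited Gaussian bound, with no H\"older inequality and no exponential moments; it also produces an explicit constant and makes transparent that the result holds for any globally Lipschitz regularising term, not just the sigmoid. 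The only thing the paper's longer route buys is structural robustness: its argument would survive if the regularising term were merely locally Lipschitz with exponentially growing local constant (as in the lognormal case it is modelled on), whereas for the proposition as actually stated your argument is strictly simpler and equally rigorous.
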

\begin{proof}
Denote by $\tilde{a},\tilde{a}_{\varphi}$ the same transformations as $a,a_{\varphi}$ respectively, acting on $\mathbb{R}$ instead of $g$. We first have the following inequality for all $x,y \in \mathbb{R}$, based on the Lipschitz continuity of $a$:
\begin{align*}
|\tilde{a}_{\varphi}(x) - \tilde{a}_{\varphi}(y)| &= \left\vert\varphi\dfrac{e^x}{1+e^x} + \tilde{a}(x) - \varphi\dfrac{e^y}{1+e^y} - \tilde{a}(y)\right\vert
\\
&\leq \varphi |e^x-e^y| + K|x-y|
\\
&\leq |x-y|\left(K + \varphi e^x + \varphi e^y\right).
\end{align*}
Now take any $p > 0$ and choose $q,r > 0$ such that $\frac{1}{p} = \frac{1}{q} + \frac{1}{r}$. It then follows from H\"older's inequality that
\[
\|a_{\varphi,M} - a_{\varphi}\|_{L^p(\mathcal{C}^0(\overline{D}), \Omega)} \lesssim \|g_M - g\|_{L^q(\mathcal{C}^0(\overline{D}), \Omega)}\left\| K + \varphi e^{g_M} + \varphi e^g \right\|_{L^r(\mathcal{C}^0(\overline{D}), \Omega)}.
\]
Because assumptions B1-B3 are satisfied, the following inequality holds up to a constant only depending on $\beta$ and $q$ \cite[Proposition 3.4]{charrier_strong_2012}
\[
\|g_M - g\|_{L^q(\mathcal{C}^0(\overline{D}), \Omega)} \lesssim (R_M^{\beta})^{1/2}.
\]
In addition, for every $r > 0$ there exists a constant $D_r$ only depending on $r$ such that \cite[Proposition 3.10]{charrier_strong_2012}
\[
\|e^{g_M}\|_{L^r(\mathcal{C}^0(\overline{D}),\Omega)} \leq D_r.
\]
Combining these expressions, we find that
\[
\|a_{\varphi,M}-a_{\varphi}\|_{L^p(\mathcal{C}^0(\overline{D}),\Omega)} \lesssim (R_M^{\beta})^{1/2}\left(K + \varphi D_r + \varphi \|e^g\|_{L^r(\mathcal{C}^0(\overline{D}),\Omega)}\right).
\]
Because $e^g$ is bounded in $L^p$-sense for all $p > 0$ \cite[Proposition 2.3]{charrier_strong_2012}, this concludes the proof by taking, for example, $r=q=2p$.
\end{proof}

\subsubsection{Strong convergence of PDE solution}

The following theorem shows the strong convergence of the PDE solution in terms of the truncation number of the KL expansion. We will only look at positive transformations $a$ and additionally require a weak upper bound on $a$. This theorem in particular motivates our choice of workaround: while the necessary bounds do hold for transformed distributions where $\varphi > 0$ (and that meet the assumptions of this theorem), they do not hold in the limit of $\varphi = 0$. However, the validity for any $\varphi > 0$ means that we can approximate the distributions, which suffices for many practical applications. In the context of this theorem, $u_M$ is the forward solution of the PDE based on a truncated KL expansion after $M$ terms of the Gaussian field $g$. The proof is based on the proof of \cite[Theorem 4.2]{charrier_strong_2012}, where similar results are derived for a diffusion equation. The estimates obtained there also hold for the linear elasticity equations \eqref{eq:lin_elastic}, as shown in \cite[Chapter 11]{brenner_mathematical_2010}.

\begin{theorem}
\label{thm:bound_pdesol}
Let $a$ be a positive, Lipschitz continuous transformation of a Gaussian field $g$ such that $\max_{x\in D}a(x,\omega) \leq \exp(\|g(\omega)\|_{\mathcal{C}^0(D)})$. If assumptions B1-B3 hold then for all $p>0$, $u_M$ converges to $u$ in $L^p(H_0^1(D),\Omega)$. Furthermore, for any $\delta > 0$,
\[
\|u-u_M\|_{L^p(H_0^1(\overline{D}),\Omega)} \lesssim M^{-\frac{1}{2}\left(\frac{k}{d}-1\right) + \delta}.
\]
\end{theorem}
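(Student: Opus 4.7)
The target estimate is purely about the deterministic elliptic PDE with a random coefficient: if we can show that pathwise (for fixed $\omega$) the solution depends Lipschitz-continuously on the coefficient in a quantitative way, then the moment bound follows by combining the $L^p$-bound on $\|a_{\varphi,M}-a_\varphi\|_{\mathcal{C}^0}$ from Proposition~\ref{thm:bound_afield} with moment bounds on the coefficient's reciprocal and on $\|u_M\|_{H^1_0}$, closing the argument by translating $R_M^\beta$ into an algebraic rate via Lemmas~\ref{thm:bound_eigvals}--\ref{thm:bound_eigfuns}. I would mirror the structure of the proof of Theorem~4.2 in \cite{charrier_strong_2012}, with adaptations reflecting the transformation $a_\varphi$ rather than a lognormal coefficient, and appeal to the corresponding pathwise estimates for linear elasticity from \cite[Ch.~11]{brenner_mathematical_2010}.

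\textbf{Step 1: pathwise stability.} Writing the weak forms for $u$ and $u_M$ with coefficients $a_\varphi$ and $a_{\varphi,M}$, subtracting, testing against $u-u_M$, and applying C\'ea/Strang-style manipulations as in \cite[Ch.~11]{brenner_mathematical_2010}, I would obtain, for every fixed $\omega$,
\begin{equation*}
\|u(\cdot,\omega)-u_M(\cdot,\omega)\|_{H^1_0(D)}
\;\lesssim\;
\frac{\|a_\varphi(\cdot,\omega)-a_{\varphi,M}(\cdot,\omega)\|_{\mathcal{C}^0(\overline D)}}{a_{\varphi,\min}(\omega)}\,
\|u_M(\cdot,\omega)\|_{H^1_0(D)},
\end{equation*}
with the usual coercivity constant $a_{\varphi,\min}(\omega)=\min_{x\in\overline D}a_\varphi(x,\omega)$. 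A standard energy estimate then gives $\|u_M\|_{H^1_0}\lesssim \|f\|_{H^{-1}}/a_{\varphi,M,\min}(\omega)$.

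\textbf{Step 2: moment bounds on the pathwise constants.} From the definition of $a_\varphi$, one has $a_\varphi(x,\omega)\ge\varphi\,e^{g(x,\omega)}/(1+e^{g(x,\omega)})$, hence
\begin{equation*}
\frac{1}{a_{\varphi,\min}(\omega)}\;\le\;\frac{1}{\varphi}\bigl(1+e^{-\min_x g(x,\omega)}\bigr)
\;\le\;\frac{1}{\varphi}\bigl(1+e^{\|g(\omega)\|_{\mathcal{C}^0(\overline D)}}\bigr),
\end{equation*}
and the same bound holds for $1/a_{\varphi,M,\min}$ with $g$ replaced by $g_M$. Using \cite[Proposition~3.10]{charrier_strong_2012}, both reciprocals therefore lie in $L^q(\Omega)$ for every $q<\infty$, with bounds independent of $M$.

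\textbf{Step 3: assembling the moment estimate.} Given $p>0$, I would choose exponents $q_1,q_2,q_3$ with $1/p=1/q_1+1/q_2+1/q_3$, apply H\"older's inequality to the pathwise bound of Step~1, and conclude
\begin{equation*}
\|u-u_M\|_{L^p(H^1_0(D),\Omega)}
\;\lesssim\;
\bigl\|a_{\varphi,\min}^{-1}\bigr\|_{L^{q_1}}\,
\|a_\varphi-a_{\varphi,M}\|_{L^{q_2}(\mathcal{C}^0(\overline D),\Omega)}\,
\bigl\|a_{\varphi,M,\min}^{-1}\bigr\|_{L^{q_3}}\,\|f\|_{H^{-1}}.
\end{equation*}
The first and third factors are bounded uniformly in $M$ by Step~2; the middle factor is bounded by $(R_M^\beta)^{1/2}$ (up to a constant independent of $M$) by Proposition~\ref{thm:bound_afield}.

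\textbf{Step 4: from $R_M^\beta$ to the algebraic rate.} It remains to show $R_M^\beta\lesssim M^{-(k/d-1)+2\delta}$. By Lemma~\ref{thm:bound_eigvals}, $\lambda_m\lesssim m^{-k/d}$; by Lemma~\ref{thm:bound_eigfuns}, $\|b_m\|_{L^\infty}$ and $\|\nabla b_m\|_{L^\infty}$ are $\lesssim \lambda_m^{-s}$ for any $s>0$. Substituting into the two series defining $R_M^\beta$, both are dominated by $\sum_{m>M}\lambda_m^{1-2s}\lesssim\sum_{m>M}m^{-(k/d)(1-2s)}$, which, for $s$ chosen small enough that $(k/d)(1-2s)>1$, yields a tail of order $M^{-(k/d)(1-2s)+1}=M^{-(k/d-1)+O(s)}$. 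Choosing $s$ so that the error term is at most $2\delta$ gives the claim, and taking the square root produces the stated $M^{-\frac12(k/d-1)+\delta}$ rate. Convergence $u_M\to u$ in $L^p(H^1_0(D),\Omega)$ follows since $R_M^\beta\to 0$.

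\textbf{Main obstacle.} The delicate part is Step~2: while the lognormal case used the explicit identity $1/a=e^{-g}$ together with sharp exponential-moment bounds, here one has to extract a lower bound on $a_\varphi$ from the additive regularizing term alone (since the Lipschitz transformation $a$ is only assumed non-negative), and then control $a_{\varphi,M,\min}^{-1}$ uniformly in $M$ even though $g_M$ need not be pathwise monotone in $M$. The bound above using $\|g(\omega)\|_{\mathcal{C}^0}$ and $\|g_M(\omega)\|_{\mathcal{C}^0}$, combined with the uniform-in-$M$ exponential moment estimate of \cite[Proposition~3.10]{charrier_strong_2012}, is what makes the argument go through for every $\varphi>0$ while failing in the limit $\varphi\to 0$, thereby explaining the necessity of the regularization introduced in~\eqref{eq:transformation}.
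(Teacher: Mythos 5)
Your proposal is correct and follows essentially the same route as the paper's proof: a pathwise perturbation estimate for the elliptic problem (the paper cites \cite[Proposition 4.1]{charrier_strong_2012} for exactly your Step 1 bound), a three-factor H\"older splitting with uniform-in-$M$ moment bounds on $1/a_{\varphi}^{\min}$ and $1/a_{M,\varphi}^{\min}$ obtained from the lower bound $a_{\varphi} \geq \varphi e^{g}/(1+e^{g})$ together with \cite[Propositions 2.3 and 3.10]{charrier_strong_2012}, then Proposition \ref{thm:bound_afield} and the tail estimate $R_M^{\beta} \lesssim \sum_{m>M}\lambda_m^{1-2s} \lesssim M^{1-(1-2s)k/d}$ with $s$ chosen proportional to $\delta$. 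The only cosmetic difference is that you re-derive the pathwise stability and energy estimates from scratch rather than citing them, and you do not invoke the hypothesis $\max_{x\in D}a(x,\omega)\leq\exp(\|g(\omega)\|_{\mathcal{C}^0(D)})$, which the paper uses at the integrability step.
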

\begin{proof}
Denote $a^{\min}_{\varphi} = \min_{x\in D}a_{\varphi}(g(x,\omega))$ and $a^{\min}_{M,\varphi} = \min_{x\in D}a_{M,\varphi}(g_M(x,\omega))$, where $a_{\varphi}, a_{M,\varphi}$ are defined as in Proposition \ref{thm:bound_afield}. We then have the following inequality
\begin{align*}
a^{\min}_{\varphi}(\omega) \geq \dfrac{\varphi}{2}\min\left\lbrace 1, e^{-\|g(\omega)\|_{\mathcal{C}^0(D)}} \right\rbrace.
\end{align*}
It follows from this inequality, together with $\max_{x\in D}a(x,\omega) \leq \exp(\|g(\omega)\|_{\mathcal{C}^0(D)})$ and \cite[Proposition 2.3]{charrier_strong_2012} that $\frac{1}{a_{\varphi}^{\min}} \in L^p(\Omega)$ for all $p > 0$. Similarly, these inequalities together with \cite[Proposition 3.10]{charrier_strong_2012} imply that $\frac{1}{a^{\min}_{M,\varphi}} \in L^p(\Omega)$ for all $p > 0$. 

Now choose $q,r,s > 0$ such that $\frac{1}{p} = \frac{1}{q} + \frac{1}{r} + \frac{1}{s}$. Then it follows from H\"older's inequality and \cite[Proposition 4.1]{charrier_strong_2012} that
\[
\|u-u_M\|_{L^p(H_0^1(\overline{D}),\Omega)} \leq \left\|\dfrac{1}{a^{\min}_{M,\varphi}}\right\|_{L^q(\Omega)}\|a_{\varphi}-a_{M,\varphi}\|_{L^r(\mathcal{C}^0(\overline{D}),\Omega)}\|b\|_{L^2}C_D\left\|\dfrac{1}{a_{\varphi}^{\min}}\right\|_{L^s(\Omega)},
\]
with $C_D$ a constant depending only on the domain $D$. It then follows from Proposition \ref{thm:bound_afield} that
\[
\|u-u_M\|_{L^p(H_0^1(\overline{D}),\Omega)} \lesssim (R_M^{\beta})^{1/2}.
\]
We will now calculate a bound on $R_M^{\beta}$. By Lemmas \ref{thm:bound_eigvals} and \ref{thm:bound_eigfuns} it holds for any $s > 0$ that
\begin{align*}
R_M^{\beta} \lesssim \sum_{m>M}|\lambda_m|^{1-2s} \lesssim \sum_{m>M} m^{-k/d(1-2s)} \lesssim M^{1-(1-2s)\frac{k}{d}}.
\end{align*}
The theorem then follows by taking $s = d\delta/k$.
\end{proof}

\subsection{Cost of multilevel MCMC estimator}
\label{seq:analysis_data}

In this next subsection, we will turn our attention to the multilevel MCMC algorithm. Using the results from Subsection \ref{seq:analysis_field}, we will adapt the convergence analysis of \cite{dodwell_hierarchical_2015} to obtain an upper bound on the $\varepsilon$-cost of the multilevel MCMC estimator in the case of Mat\'ern covariance. Furthermore, we show how the level-dependent treatment of observations can be integrated in the existing convergence analysis. Similarly to before, we will not repeat the entire convergence analysis, but only show where alterations are needed such that the results of \cite{dodwell_hierarchical_2015} hold in our case.

\subsubsection{Main theorem: cost of multilevel estimator}

The main result is Theorem \ref{thm:epscost}, giving a bound on the $\varepsilon$-cost (i.e. the cost of obtaining a root mean squared error $<\varepsilon$) of the multilevel estimator under a set of assumptions. Referring to the notation introduced in Subsection \ref{subsec:underlying}, in the context of this theorem $\pmb{\Theta}_{\ell} = \{\theta_{\ell}^n\}_{n\in\mathbb{N}} \cup \{\Theta_{\ell-1}^n\}_{n\in\mathbb{N}}$ for $\ell \geq 1$ and $\pmb{\Theta}_0 = \{\theta_0^n\}_{n\in\mathbb{N}}$. $\nu^{\ell,\ell-1}$ is the joint distribution of $\theta_{\ell}$ and $\Theta_{\ell-1}$ for $\ell\geq 1$, where the marginals of $\theta_{\ell}$ and $\Theta_{\ell-1}$ are $\nu^{\ell}$ and $\nu^{\ell-1}$, respectively. $\rho$ is the true distribution of $Q(\vartheta)$, and $\mathbb{E}_{\bullet}$ and $\mathbb{V}_{\bullet}$ denote the expected value and variance of a quantity under the distribution in their subscript, respectively. $\mathcal{C}_{\ell}$ denotes the cost of evaluating the forward model at level $\ell$. For any subscript, $C_{\bullet}$ denotes a real-valued number independent of $\ell$. Finally, for consistency we let $Y_0 = Q_0$, $\nu^{0,-1}=\nu^0$ and $R_{-1}=1$.
 For more details and a general proof of this theorem, we refer to \cite{dodwell_hierarchical_2015}.

\begin{theorem}
\label{thm:epscost}
Let $\varepsilon < \exp(-1)$, and suppose there are positive constants $\alpha, \alpha', \beta, \beta',$ $ \gamma > 0$ such that $\alpha \geq \frac{1}{2} \min(\beta, \gamma)$. Under the following assumptions, for $\ell = 0,\ldots,L$,
\begin{enumerate}[label=M\arabic*.]
\item $|\mathbb{E}_{\nu^{\ell}}[Q_{\ell}] - \mathbb{E}_{\rho}[Q]| \leq C_{\text{M1}}\ (R_{\ell}^{-\alpha} + M_{\ell}^{-\alpha'}),$
\item $\mathbb{V}_{\nu^{\ell,\ell-1}}[Y_{\ell}] \leq C_{\text{M2}}\ (R_{\ell-1}^{-\beta} + M_{\ell-1}^{-\beta'}),$
\item $\mathbb{V}_{\pmb{\Theta}_{\ell}}[\widehat{Y}_{\ell, N_{\ell}}^{MC}] + (\mathbb{E}_{\pmb{\Theta}_{\ell}}[\widehat{Y}_{\ell, N_{\ell}}^{MC}] - \mathbb{E}_{\nu^{\ell,\ell-1}}[\widehat{Y}_{\ell, N_{\ell}}^{MC}])^2 \leq C_{\text{M3}}\ N_{\ell}^{-1}\mathbb{V}_{\nu^{\ell,\ell-1}}[Y_{\ell}]$,
\item $\mathcal{C}_{\ell} \leq C_{\text{M4}}\ R_{\ell}^{\gamma}$,
\end{enumerate} 
and, provided that $M_{\ell} \gtrsim R_{\ell}^{\max\{\alpha/\alpha', \beta/\beta'\}}$, there exist a number of levels $L$ and a sequence $\{n_{\ell}\}_{\ell=0}^L$ such that
\[
e(\widehat{Q}_{L,\{n_{\ell}\}}^{ML})^2 := \mathbb{E}_{\cup_{\ell}\Theta_{\ell}}\left[\left(\widehat{Q}_{L,\{n_{\ell}\}}^{ML} - \mathbb{E}_{\rho}[Q]\right)^2\right] < \varepsilon^2
\]
and
\[
\mathcal{C}_{\varepsilon}(\widehat{Q}_{L,\{N_{\ell}\}}^{ML}) \leq C_{\text{ML}} \left\{
\begin{array}{ll}
\varepsilon^{-2}|\log\varepsilon|\quad & \text{if }\ \beta > \gamma,
\\
\varepsilon^{-2}|\log\varepsilon|^3\quad & \text{if }\ \beta = \gamma,
\\
\varepsilon^{-2-(\gamma-\beta)/\alpha}|\log\varepsilon|\quad & \text{if }\ \beta < \gamma.
\end{array}
\right.
\]
\end{theorem}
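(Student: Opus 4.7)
The plan is to follow the multilevel cost analysis of \cite{dodwell_hierarchical_2015}, treating assumptions M1--M4 as abstract inputs and extracting the $\varepsilon$-cost by balancing bias, variance and work across levels. Writing $\widehat{Q}^{ML}_{L,\{N_\ell\}} = \sum_{\ell=0}^{L}\widehat{Y}^{MC}_{\ell,N_\ell}$ and observing that $\sum_{\ell=0}^L \mathbb{E}_{\nu^{\ell,\ell-1}}[Y_\ell] = \mathbb{E}_{\nu^L}[Q_L]$ telescopes, the standard decomposition gives
\begin{equation*}
e\bigl(\widehat{Q}^{ML}_{L,\{N_\ell\}}\bigr)^{\!2} \;\leq\; 2\bigl(\mathbb{E}_{\nu^L}[Q_L]-\mathbb{E}_\rho[Q]\bigr)^{\!2} + 2\sum_{\ell=0}^{L}\Bigl(\mathbb{V}_{\pmb{\Theta}_\ell}[\widehat{Y}^{MC}_{\ell,N_\ell}] + \bigl(\mathbb{E}_{\pmb{\Theta}_\ell}[\widehat{Y}^{MC}_{\ell,N_\ell}] - \mathbb{E}_{\nu^{\ell,\ell-1}}[Y_\ell]\bigr)^{\!2}\Bigr),
\end{equation*}
where the first term is the finite-$L$ discretisation bias controlled by M1 and the bracketed per-level terms are controlled jointly by M3 composed with M2. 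Under the hypothesis $M_\ell \gtrsim R_\ell^{\max(\alpha/\alpha',\beta/\beta')}$, the $M$-dependence in M1 and M2 is absorbed, reducing them to $\lesssim R_L^{-\alpha}$ and $\lesssim R_{\ell-1}^{-\beta}$ respectively.

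Next, I would fix $L$ by requiring the squared bias to be at most $\varepsilon^2/4$, which forces $R_L \sim \varepsilon^{-1/\alpha}$ and, through the geometric refinement \eqref{eq:refinefactor}, $L \sim |\log\varepsilon|$. With the per-level contribution bounded above by a constant times $R_{\ell-1}^{-\beta}/N_\ell$ and the cost per sample by $C_{M4} R_\ell^\gamma$, requiring the total remaining error to be at most $\varepsilon^2/4$ and minimising $\sum_\ell N_\ell R_\ell^\gamma$ by a Lagrange-multiplier argument yields the classical choice $N_\ell \propto \sqrt{V_\ell/\mathcal{C}_\ell} \propto s^{-(\beta+\gamma)\ell/2}$ (rounded up to an integer), and a total cost
\begin{equation*}
\mathcal{C}_\varepsilon\bigl(\widehat{Q}^{ML}_{L,\{N_\ell\}}\bigr) \;\lesssim\; \varepsilon^{-2}\Bigl(\sum_{\ell=0}^{L}\sqrt{V_\ell\, \mathcal{C}_\ell}\Bigr)^{\!2} \;\lesssim\; \varepsilon^{-2}\Bigl(\sum_{\ell=0}^{L} s^{(\gamma-\beta)\ell/2}\Bigr)^{\!2}.
\end{equation*}

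The three cases then fall out of the behaviour of this geometric sum. When $\beta>\gamma$ the sum is bounded uniformly in $L$; when $\beta=\gamma$ it is $\Theta(L)=\Theta(|\log\varepsilon|)$, contributing $|\log\varepsilon|^2$ after squaring; when $\beta<\gamma$ it is dominated by the terminal term $s^{(\gamma-\beta)L/2}\sim R_L^{(\gamma-\beta)/2}\sim\varepsilon^{-(\gamma-\beta)/(2\alpha)}$, contributing $\varepsilon^{-(\gamma-\beta)/\alpha}$ after squaring. The extra $|\log\varepsilon|$ factor that appears in each of the three stated bounds arises from enforcing $N_\ell \geq 1$ on every level and distributing the per-level MCMC-bias portion of M3 uniformly across the $L+1\sim|\log\varepsilon|$ active levels rather than only across the variance budget; the hypothesis $\alpha \geq \tfrac12\min(\beta,\gamma)$ is then used to confirm that this rebalancing does not alter the dominant term in the $\beta<\gamma$ regime. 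The principal obstacle I anticipate is that, unlike in standard MLMC, the per-level estimator is biased and the pair of chains on levels $\ell$ and $\ell-1$ is statistically coupled through the subsampling of $\pmb{\Theta}_\ell$; one must verify that the per-level variance summation above is valid despite these couplings, which amounts to careful bookkeeping of the constants absorbed into $C_{M3}$ in terms of the subsampling rates $\tau_\ell$ and burn-in counts $B_\ell$ so that they do not spoil the geometric scaling.
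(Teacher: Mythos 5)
Two pieces of context first. The paper does not actually prove this theorem: it states it as imported from \cite{dodwell_hierarchical_2015} (``For more details and a general proof of this theorem, we refer to...''), and its own technical work is the verification of assumptions M1--M2 under level-dependent data and Mat\'ern-type fields (Propositions \ref{thm:M1holds} and \ref{thm:M2holds}). So your sketch is a reconstruction of the cited proof rather than of anything in the paper itself; its skeleton is nevertheless the right one --- absorbing the $M_\ell$-dependence via $M_\ell \gtrsim R_\ell^{\max(\alpha/\alpha',\beta/\beta')}$, fixing $R_L \sim \varepsilon^{-1/\alpha}$ so that $L \sim |\log\varepsilon|$, taking $N_\ell \propto \sqrt{V_\ell/\mathcal{C}_\ell}$, reading the three regimes off the geometric sum $\sum_\ell s^{(\gamma-\beta)\ell/2}$, and using $\alpha \geq \tfrac12\min(\beta,\gamma)$ to control the $N_\ell \geq 1$ cost floor in the $\beta<\gamma$ regime.

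There is, however, a genuine error in your displayed decomposition, and it sits exactly where this theorem differs from the classical MLMC complexity theorem. The per-level MCMC biases $b_\ell = \mathbb{E}_{\pmb{\Theta}_\ell}[\widehat{Y}^{MC}_{\ell,N_\ell}] - \mathbb{E}_{\nu^{\ell,\ell-1}}[Y_\ell]$ accumulate \emph{linearly} in the total bias of $\widehat{Q}^{ML}$, so the squared total bias is controlled by $(L+2)\bigl(\sum_\ell b_\ell^2 + a^2\bigr)$ via Cauchy--Schwarz, where $a$ is the discretisation bias --- not by $2\bigl(\sum_\ell b_\ell^2 + a^2\bigr)$ as your inequality asserts; that inequality is simply false for $L\geq 1$. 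This is not a cosmetic constant: with a factor $2$ your argument would deliver the \emph{unlogged} classical rates $\varepsilon^{-2}$, $\varepsilon^{-2}|\log\varepsilon|^2$, $\varepsilon^{-2-(\gamma-\beta)/\alpha}$, contradicting the statement being proved. The uniform extra $|\log\varepsilon|$ in all three cases is precisely the factor $L+2 \sim |\log\varepsilon|$: after applying M3, one must demand $\sum_\ell N_\ell^{-1}\mathbb{V}_{\nu^{\ell,\ell-1}}[Y_\ell] \lesssim \varepsilon^2/|\log\varepsilon|$, which inflates every $N_\ell$, and hence the total cost, by $|\log\varepsilon|$. Your prose remark about ``distributing the per-level MCMC-bias portion across the $L+1$ levels'' is the correct repair but contradicts your own display, and the companion attribution to the constraint $N_\ell \geq 1$ is off --- that constraint contributes no logarithm and is already neutralised by $\alpha \geq \tfrac12\min(\beta,\gamma)$. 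Your closing concern about statistical coupling between levels through subsampling is legitimate, but at the level of this abstract theorem it is packaged into assumption M3 (whose verification the paper, like you, defers to \cite{dodwell_hierarchical_2015}); it is not an additional obstacle in the cost calculation itself.
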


Assumption M3 involves bounding the mean squared error of an MCMC estimator, and is satisfied provided the chains are sufficiently burnt in. Assumption M4 constitutes an upper bound on the cost of a single sample on level $\ell$, and in a best case scenario this is met for $\gamma = 1$. For proofs of the general validity of these two assumptions we refer to \cite{dodwell_hierarchical_2015}.

\subsubsection{Proof of validity of assumptions} 

Assumptions M1-M2 require some additional attention. In order to prove them, we need the results from the previous subsection, as well as the following three assumptions:

\begin{enumerate}[label=A\arabic*.]
\item $\mathcal{F}: H^1(D) \to \mathbb{R}$ is linear, and there exists $C_{\mathcal{F}} \in \mathbb{R}$ such that
\[
|\mathcal{F}(v)| \leq C_{\mathcal{F}}\|v\|_{H^{1/2-\delta}} \quad \text{for all }\delta > 0.
\]
\item The sequence of fidelity parameters $\{\sigma
^2_{F,\ell}\}_{\ell=0}^{\infty}$ satisfies
\[
|N_L^{-1}\sigma_F^{-2} - N_{\ell}^{-1}\sigma_{F,\ell}^{-2}| \lesssim \max\left(M_{\ell}^{-\frac{1}{2}(\frac{k}{d}-1)+\delta}, R_{\ell}^{-1/d+\delta}\right)\quad \text{for all }\delta > 0.
\]
\item There exists $C_W \in \mathbb{R}$ such that the sequence of level-dependent observation functions $\{W_{\ell}:\mathbb{R}^{N_L}\to \mathbb{R}^{N_{\ell}}\}_{\ell=0}^{L}$ satisfies
\[
\|W_{\ell}(u^{\text{obs}}) - W^S_{\ell}(u^{\text{obs}})\| \leq C_W \max\left(M_{\ell}^{-\frac{1}{2}(\frac{k}{d}-1)+\delta}, R_{\ell}^{-1/d+\delta}\right)\quad \text{for all }\delta > 0.
\]
\end{enumerate}

Assumption A1 is the same as in the original convergence analysis \cite{dodwell_hierarchical_2015}. Assumption A2 now includes the number of data points $N_{\ell}$ used on each level $\ell$. As $N_{\ell} = N_L$ for $\ell \geq L$, it holds in the same cases as the original level-independent version of this assumption (where $N_{\ell}=N_L$ for all $\ell$). Assumption A3 allows for treatments of observations on coarse levels other than simply selecting only those coinciding with nodes on the coarser level, as long as these more general weight functions do not deviate too strongly from $W_{\ell}^S$. Using these three assumptions we can first prove the validity of assumption M1.

\begin{proposition}
\label{thm:M1holds}
Let $g$ be a Gaussian field with constant mean and covariance function in $H^{k,0}(D^2)$ with $k\geq 3$, and let $a$ be a positive, Lipschitz continuous transformation of $g$ such that $\max_{x\in D}a(x,\omega) \leq \exp(\|g(\omega)\|_{\mathcal{C}^0(D)})$. Suppose $D \subset \mathbb{R}^d$ is Lipschitz polygonal and that assumptions A1-A3 hold. Then assumption M1 is satisfied with $\alpha = 1/d - \delta$ and $\alpha' = \frac{1}{2}(\frac{k}{d}-1) - \delta$ for any $\delta > 0$.
\end{proposition}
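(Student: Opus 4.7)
The plan is to follow the Bayesian posterior-comparison strategy of Dodwell et al.\ but to carefully isolate the four sources of error that enter $\mathbb{E}_{\nu^{\ell}}[Q_{\ell}]$, namely (i) the finite element discretisation at resolution $R_{\ell}$, (ii) the KL truncation at $M_{\ell}$ modes, (iii) the weighting map $W_{\ell}$ replacing the full data $u^{\mathrm{obs}}$, and (iv) the level-dependent fidelity factor $N_{\ell}^{-1}\sigma_{F,\ell}^{-2}$. The target rate has to be read as the worse of the rate in $R_{\ell}$ (driven by FE convergence in the $H^{1/2-\delta}$-dual norm via A1) and the rate in $M_{\ell}$ (driven by Theorem~\ref{thm:bound_pdesol}, via the KL truncation of the underlying Gaussian field with Mat\'ern smoothness $k$).

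First I would split
\begin{equation*}
\bigl|\mathbb{E}_{\nu^{\ell}}[Q_{\ell}] - \mathbb{E}_{\rho}[Q]\bigr|
\;\leq\;
\bigl|\mathbb{E}_{\nu^{\ell}}[Q_{\ell} - Q]\bigr|
\;+\;
\bigl|\mathbb{E}_{\nu^{\ell}}[Q] - \mathbb{E}_{\rho}[Q]\bigr|.
\end{equation*}
For the first term, since $Q_{\ell}$ is obtained by applying the linear functional $\mathcal{F}$ from A1 to the approximate PDE solution $u_{\ell,M_{\ell}}$ (FE on level $\ell$, KL truncated at $M_{\ell}$), A1 gives $|Q_{\ell}-Q| \leq C_{\mathcal{F}}\|u_{\ell,M_{\ell}}-u\|_{H^{1/2-\delta}}$. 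I would bound this further by $\|u_{\ell,M_{\ell}}-u_{M_{\ell}}\|_{H^{1/2-\delta}}+\|u_{M_{\ell}}-u\|_{H^{1/2-\delta}}$. The first term is the standard FE error for linear elasticity on a Lipschitz polygonal domain, which in $L^{p}(\Omega)$ scales as $R_{\ell}^{-1/d+\delta}$ because of the restricted dual regularity under A1, and the second is controlled by Theorem~\ref{thm:bound_pdesol} and scales as $M_{\ell}^{-\frac{1}{2}(k/d-1)+\delta}$. Taking expectations under $\nu^{\ell}$ and applying H\"older's inequality (using that $1/a_{\varphi}^{\min}$ and $\|e^{g}\|_{\mathcal{C}^{0}}$ have moments of all orders, cf.\ the proof of Theorem~\ref{thm:bound_pdesol}) yields the required bound in the desired rate.

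For the posterior-distance term $|\mathbb{E}_{\nu^{\ell}}[Q]-\mathbb{E}_{\rho}[Q]|$ I would invoke the standard well-posedness estimate for Bayesian inverse problems, which bounds this quantity by the Hellinger (or total variation) distance between $\nu^{\ell}$ and $\rho$, and in turn bounds that distance by the $L^{2}$-difference between the corresponding negative log-likelihoods. Writing out the log-likelihood difference gives three contributions:
\begin{equation*}
\tfrac{1}{2N_{\ell}\sigma_{F,\ell}^{2}}\|W_{\ell}(u^{\mathrm{obs}})-F_{\ell}(\theta)\|^{2}
- \tfrac{1}{2N_{L}\sigma_{F}^{2}}\|u^{\mathrm{obs}}-F(\theta)\|^{2},
\end{equation*}
which I would split using $\pm$-tricks into (a) a forward-model error $F_{\ell}(\theta)-F(\theta)$ (handled as above), (b) a data-weighting error $W_{\ell}(u^{\mathrm{obs}})-W_{\ell}^{S}(u^{\mathrm{obs}})$ controlled by A3, together with $W_{\ell}^{S}(u^{\mathrm{obs}})-u^{\mathrm{obs}}$ controlled by restriction consistency on nested meshes (using A1-type FE traces), and (c) a fidelity coefficient error controlled by A2. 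Each of these contributes at most $\max(M_{\ell}^{-\frac{1}{2}(k/d-1)+\delta},R_{\ell}^{-1/d+\delta})$, precisely the rates packaged into A2--A3.

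The main obstacle I anticipate is the bookkeeping in step (b)--(c): the likelihood is a squared norm, so when the squared term is expanded the cross terms involve both the perturbation and the unperturbed norm, and the latter must be shown to have bounded $L^{p}(\nu^{\ell})$-norm uniformly in $\ell$. I would handle this by exploiting $F_{\ell}(\theta) \in L^{p}$ uniformly in $\ell$ (which follows from A1 and the uniform moment bounds for $u_{\ell,M_{\ell}}$ established through $1/a_{\varphi}^{\min}$), and by using that $N_{\ell}^{-1}\sigma_{F,\ell}^{-2}$ is bounded above, so that the prefactor does not blow up. Combining the two halves of the decomposition gives the bound in assumption M1 with $\alpha = 1/d - \delta$ and $\alpha' = \tfrac{1}{2}(k/d-1) - \delta$, as claimed.
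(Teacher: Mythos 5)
Your proposal follows essentially the same route as the paper's proof: the same splitting into a discretisation-bias term (handled by A1, the FE rate $R_{\ell}^{-1/d+\delta}$, and Theorem~\ref{thm:bound_pdesol} for the $M_{\ell}$-rate) plus a Hellinger-distance term between $\rho$ and the level-$\ell$ posterior, which is then bounded by the $L^2$-difference of log-likelihoods and split by the triangle inequality into a forward-model error, a weighting error controlled by A3, and a fidelity error controlled by A2, with cross terms absorbed by uniform moment bounds (the paper does this via \cite[Lemma 4.5, Corollary 4.3]{dodwell_hierarchical_2015}). The only cosmetic difference is your intermediate term $W_{\ell}^{S}(u^{\mathrm{obs}})-u^{\mathrm{obs}}$, which the paper avoids by comparing the differently normalised quantities $N_L^{-1}\sigma_F^{-2}\|u^{\mathrm{obs}}-F\|^2$ and $N_{\ell}^{-1}\sigma_{F,\ell}^{-2}\|W_{\ell}(u^{\mathrm{obs}})-F_{\ell}\|^2$ directly, so that the mismatch in data dimension is absorbed entirely by A2 and A3.
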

\begin{proof}
The proof follows closely the proof of \cite[Lemma 4.5]{dodwell_hierarchical_2015}. It follows from this lemma and theorem \ref{thm:bound_pdesol} that
\begin{align*}
|\mathbb{E}_{\nu^{\ell}}[Q_{\ell}] - \mathbb{E}_{\rho}[Q]| &\leq C_Q \left(R_{\ell}^{-1/d+\delta} + M_{\ell}^{-\frac{1}{2}(\frac{k}{d}-1)+\delta}\right) + d_{\text{Hell}}(\rho,\rho^{\ell}).
\end{align*}
For the second term on the right hand side, we introduce $\rho^{\ell}$ as the posterior measure of $\vartheta$ with the likelihood evaluated at level $\ell$. This gives us, using assumption A1, a bound on the Hellinger distance between $\rho$ and $\rho^{\ell}$:
\begin{align*}
d_{\text{Hell}}^2(\rho,\rho^{\ell}) \leq \mathbb{E}_{\rho_0}\left[\left\vert\dfrac{\|u^{\text{obs}} - F\|^2}{N_L\sigma_F^2} - \dfrac{\|W_{\ell}(u^{\text{obs}}) - F_{\ell}\|^2}{N_{\ell}\sigma_{F, \ell}^2}\right\vert^2\right].
\end{align*}

The expression inside the expectation can be bounded by use of the triangle inequality
\begin{align*}
&\left\vert\dfrac{\|u^{\text{obs}} - F\|^2}{N_L\sigma_F^2} - \dfrac{\|W_{\ell}(u^{\text{obs}}) - F_{\ell}\|^2}{N_{\ell}\sigma_{F, \ell}^2}\right\vert
\\
&\leq  \left\vert\dfrac{\left(\|W_{\ell}^S(u^{\text{obs}}) - W_{\ell}(u^{\text{obs}})\| + \|W_{\ell}(u^{\text{obs}})-F_{\ell}\| + \|W_{\ell}^S(F)-F_{\ell}\|\right)^2}{N_L\sigma_F^2} - \dfrac{\|W_{\ell}(u^{\text{obs}}) - F_{\ell}\|^2}{N_{\ell}\sigma_{F, \ell}^2}\right\vert
\\
&= \dfrac{2\|W_{\ell}(u^{\text{obs}}) - F_{\ell}\|+2\|W_{\ell}^S(F)-F_{\ell}\|+\|W_{\ell}^S(u^{\text{obs}}) - W_{\ell}(u^{\text{obs}})\|}{N_L\sigma_F^2}\|W_{\ell}^S(u^{\text{obs}}) - W_{\ell}(u^{\text{obs}})\|
\\
&\hspace*{10mm}+ \dfrac{2\|W_{\ell}(u^{\text{obs}}) - F_{\ell}\|+\|W_{\ell}^S(F)-F_{\ell}\|}{N_L\sigma_F^2}\|W_{\ell}^S(F)-F_{\ell}\|
\\
&\hspace*{20mm} + \|W_{\ell}(u^{\text{obs}}) - F_{\ell}\|^2 \left\vert N_L^{-1}\sigma_F^{-2} - N_{\ell}^{-1}\sigma_{F,\ell}^{-2}\right\vert.
\end{align*}
From assumption A1, theorem \ref{thm:bound_pdesol} and \cite[Corollary 4.3]{dodwell_hierarchical_2015} it holds for all $q<\infty$ that
\[
\mathbb{E}_{\rho_0}\left[\|W_{\ell}^S(F) - F_{\ell}\|^q\right]^{1/q} \leq C_{\text{A1}} \left(R_{\ell}^{-1/d+\delta} + M_{\ell}^{-\frac{1}{2}(\frac{k}{d}-1)+\delta}\right).
\]
Combining this with assumptions A2-A3, it follows that 
\[
\mathbb{E}_{\rho_0}\left[\left\vert\dfrac{\|u^{\text{obs}} - F\|^2}{N_L\sigma_F^2} - \dfrac{\|W_{\ell}(u^{\text{obs}}) - F_{\ell}\|^2}{N_{\ell}\sigma_{F, \ell}^2}\right\vert^2\right] \leq C_{\text{Hell}} \left(R_{\ell}^{-1/d+\delta} + M_{\ell}^{-\frac{1}{2}(\frac{k}{d}-1)+\delta}\right)^2,
\]
which concludes the proof.
\end{proof}

A similar bound on assumption M2 can be proven using the results we obtained previously.

\begin{proposition}
\label{thm:M2holds}
Let $\theta_{\ell}^{n+1}$ and $\Theta_{\ell-1}^{n+1}$ have joint distribution $\nu^{\ell,\ell-1}$ and set $Y_{\ell}^{n+1} = Q_{\ell}(\theta_{\ell}^{n+1})$ $- Q_{\ell-1}(\Theta_{\ell-1}^{n+1})$. If $q_{ML}^{\ell,F}$ is a pCN proposal distribution, then assumption M2 is satisfied with $\beta = 1/d - \delta$ and $\beta' = \frac{1}{2}(\frac{k}{d}-1) - \delta$ for any $\delta > 0$.
\end{proposition}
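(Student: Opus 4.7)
The plan is to adapt the proof of \cite[Lemma 4.6]{dodwell_hierarchical_2015} to our setting, leveraging Theorem \ref{thm:bound_pdesol} and reusing much of the machinery already deployed in the proof of Proposition \ref{thm:M1holds}. I would start from the trivial bound $\mathbb{V}_{\nu^{\ell,\ell-1}}[Y_\ell] \leq \mathbb{E}_{\nu^{\ell,\ell-1}}[|Q_\ell(\theta_\ell) - Q_{\ell-1}(\Theta_{\ell-1})|^2]$ and invoke assumption A1 on the linear functional $\mathcal{F}$ to rewrite the right-hand side in terms of $\|u_{\ell,M_\ell}(\theta_\ell) - u_{\ell-1,M_{\ell-1}}(\Theta_{\ell-1})\|_{H^{1/2-\delta}}^2$, where $u_{\ell,M_\ell}$ denotes the FE solution with $M_\ell$ KL terms.

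Next, I would split this difference by inserting the exact solution $u(\vartheta)$ associated with the untruncated field,
\[
u_{\ell,M_\ell}(\theta_\ell) - u_{\ell-1,M_{\ell-1}}(\Theta_{\ell-1}) = \bigl(u_{\ell,M_\ell}(\theta_\ell) - u(\vartheta)\bigr) - \bigl(u_{\ell-1,M_{\ell-1}}(\Theta_{\ell-1}) - u(\vartheta)\bigr),
\]
and bound each summand in $L^q(H^1_0(D),\Omega)$ under the prior, for arbitrary $q < \infty$, using Theorem \ref{thm:bound_pdesol} for the KL truncation error and the FE error estimate of \cite[Corollary 4.3]{dodwell_hierarchical_2015} (which extends to the linear elasticity system via \cite[Chapter 11]{brenner_mathematical_2010}) for the discretisation error. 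Since $R_\ell \geq R_{\ell-1}$ and $M_\ell \geq M_{\ell-1}$, both contributions are dominated by the coarser level, yielding the scaling $R_{\ell-1}^{-1/d+\delta} + M_{\ell-1}^{-\frac{1}{2}(k/d-1)+\delta}$.

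The main obstacle will be transferring this prior-measure bound to the joint posterior $\nu^{\ell,\ell-1}$. For this, I would exploit two structural features of the sampler: the pCN proposal is reversible with respect to the Gaussian prior on the fine modes, and the coarse modes of $\theta_\ell$ are coupled to $\Theta_{\ell-1}$ through the subsampled chain, so the coarse-mode components agree across levels up to accept/reject discrepancies that themselves are controlled by acceptance probabilities tending to one. A Radon-Nikodym argument, combined with the Hellinger-distance bound between posterior and prior already derived in the proof of Proposition \ref{thm:M1holds} (which relies on assumptions A1--A3 and the level-dependent likelihood \eqref{eq:levellike}), then produces, up to higher-order corrections, the desired bound
\[
\mathbb{V}_{\nu^{\ell,\ell-1}}[Y_\ell] \lesssim R_{\ell-1}^{-1/d+\delta} + M_{\ell-1}^{-\frac{1}{2}(k/d-1)+\delta},
\]
which is exactly assumption M2 with $\beta = 1/d - \delta$ and $\beta' = \tfrac{1}{2}(k/d - 1) - \delta$. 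The delicate point in this last step is ensuring that the introduction of the level-dependent weighting $W_\ell$ and the level-dependent fidelity $\sigma_{F,\ell}^2$ does not degrade the Hellinger estimate, which is precisely what assumptions A2 and A3 are designed to guarantee.
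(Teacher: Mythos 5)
Your route has a genuine gap, and it sits exactly where the paper's one-line proof hides its content. The paper proves this proposition by citing \cite[Lemmas 4.6--4.8]{dodwell_hierarchical_2015} together with Theorem \ref{thm:bound_pdesol} and the triangle-inequality estimate from the proof of Proposition \ref{thm:M1holds}; the substance of those lemmas is a coupling argument over the accept/reject event that your plan bypasses. Under $\nu^{\ell,\ell-1}$, the states $\theta_{\ell}$ and $\Theta_{\ell-1}$ share their coarse modes only when the most recent fine-level proposal was \emph{accepted}; upon rejection, $\theta_{\ell}^{n+1}=\theta_{\ell}^{n}$ retains coarse modes from an earlier subsample, and then there is no common underlying parameter $\vartheta$ for which your decomposition
\[
u_{\ell,M_\ell}(\theta_\ell) - u_{\ell-1,M_{\ell-1}}(\Theta_{\ell-1}) = \bigl(u_{\ell,M_\ell}(\theta_\ell) - u(\vartheta)\bigr) - \bigl(u_{\ell-1,M_{\ell-1}}(\Theta_{\ell-1}) - u(\vartheta)\bigr)
\]
is meaningful: each term would need its own $\vartheta$, and the difference of the two exact solutions is $O(1)$, not a truncation or discretisation error. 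The correct argument splits on acceptance versus rejection: on acceptance your estimate (Theorem \ref{thm:bound_pdesol} plus the FE error bound) applies, while on rejection $|Y_{\ell}|$ is $O(1)$ and the contribution is controlled by the \emph{rejection probability}, which must itself be shown to satisfy $\mathbb{E}\left[1-\alpha_{M_{\ell},R_{\ell}}\right] \lesssim R_{\ell}^{-1/d+\delta}+M_{\ell}^{-\frac{1}{2}(\frac{k}{d}-1)+\delta}$ (the adaptation of \cite[Lemma 4.7]{dodwell_hierarchical_2015}). This term enters the variance \emph{linearly}, which is precisely why $\beta = 1/d-\delta$ rather than $2/d-\delta$: had your argument gone through, it would yield a rate twice as good as the one claimed, which is a signal that the rejected states have been dropped. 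The paper itself points at this in Subsection \ref{sec:exp_smoothness}, where it states that the value of $\beta'$ arises from the bound on the rejection rate.

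You do acknowledge the accept/reject discrepancy, but only qualitatively (``acceptance probabilities tending to one''), and you fold it into a Radon--Nikodym/Hellinger step that conflates two different devices. The Radon--Nikodym argument for transferring prior $L^q$ bounds to the posterior is standard and unproblematic (the likelihood is bounded above and below on the relevant sets); the Hellinger bound derived in the proof of Proposition \ref{thm:M1holds} is between $\rho$ and $\rho^{\ell}$ (exact versus level-$\ell$ likelihood), not between posterior and prior, and it controls the bias term in assumption M1, not the variance in M2. Where the level-dependent ingredients $W_{\ell}$ and $\sigma_{F,\ell}$ actually enter this proposition is in bounding $1-\alpha_{M_{\ell},R_{\ell}}$: the acceptance ratio \eqref{eq:ml_accept} compares likelihoods at levels $\ell$ and $\ell-1$ that use different weightings and fidelities, so the quantitative rejection-rate bound requires exactly the triangle-inequality estimate from Proposition \ref{thm:M1holds} together with assumptions A2--A3. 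Repairing your proof means restoring that rejection-rate lemma as the central quantitative ingredient, not a remark.
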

\begin{proof}
This follows immediately from \cite[Lemmas 4.6-4.8]{dodwell_hierarchical_2015}, Theorem \ref{thm:bound_pdesol}, and the triangle inequality estimate used in the proof of Proposition \ref{thm:M1holds}.
\end{proof}

The results of these propositions show that the assumptions of theorem \ref{thm:epscost} hold in the same way for level-dependent treatment as they did for level-independent treatment \cite{dodwell_hierarchical_2015}. Additionally, we obtain different values for the parameters $\alpha',\beta'$ for the case of finite Sobolev regularity of the covariance function. In the case of exponential two-point covariance, the assumptions hold with $\alpha'=\beta'=\frac{1}{2}-\delta$ for any $\delta > 0$. For larger values of $k$, we find larger values of $\alpha'$ and $\beta'$, leading to faster convergence of the terms in assumptions M1 and M2.

\section{Numerical experiments}
\label{ch:numerics}

In this section, we will run the multilevel MCMC algorithm with level-dependent data treatment for the clamped beam model outlined in Section \ref{ch:beammodel}. We consider a beam with a length of 3 m and a height of 0.3 m. We further assume that the material is linearly elastic and isotropic with the following values as for concrete: a Poisson's ratio of 0.25 and a material density of 2500 kg/m$^3$. The variable Young's modulus will be normalised with respect to a value of 30 GPa. The line load is assumed to exert a total downward force of $10^3$ N across its length. Each experiment consists of 35 parallel runs using a coarsest chain length of $1.5\cdot 10^7$ samples, resulting in slightly over $5\cdot 10^8$ total coarsest-level samples.

We consider synthetic displacement observations, generated from an artificial ground truth for the stiffness field constructed on the finest level of the multilevel hierarchy. This field is obtained by sampling from a $\mathcal{N}(0, I_{M_L})$ prior distribution, with $M_L$ the truncation number of the KL expansion at the finest level. We assume to have a measurement error $\eta$ corresponding to a fidelity of $\sigma_F = 10^{-8}$ m, which corresponds to an order of magnitude of about one percent on the observed displacement, which typically on the order of $10^{-6}$ m. The samples are first fed through the KL expansion in Equation \eqref{eq:normalfield} and then the transformation of Equation \eqref{eq:transformation} with scale and shape parameters $\mu=0.4$ and $\kappa=2.5$, respectively, to obtain the desired properties for the spatially varying Young's modulus.

We will support our theoretical work with two experiments. First, in Subsection \ref{sec:exp_smoothness}, we will verify the values for $\alpha'$ and $\beta'$ found in Propositions \ref{thm:M1holds} and \ref{thm:M2holds} for the case of a Mat\'ern covariance function. Second, in Subsection \ref{sec:exp_singlemulti}, we illustrate the gains of the multilevel method over classical single-level MCMC, paying special attention to the cost of evaluating high-resolution data. Additionally, Subsection \ref{sec:reconstruction} shows the use of the method to obtain posterior estimates.

All finite element computations shown in this section are performed using the Stabil toolbox for Matlab, developed at KU Leuven \cite{francois_stabil_2021}. The code used to generate the results is open-source, and can be accessed at \href{https://gitlab.kuleuven.be/u0114679/mlmcmc-beam-model}{https://gitlab.kuleuven.be/u0114679/mlmcmc-beam-model}.

\subsection{Effect of covariance smoothness}
\label{sec:exp_smoothness}

The convergence analysis of Section \ref{ch:conv_analysis} shows that, for a general covariance function of finite Sobolev regularity $k$, assumptions M1 and M2 with $\alpha'=\beta'=\frac{1}{2}(\frac{k}{d}-1)-\delta$ for all $\delta >0$. We will now conduct a simple numerical experiment to verify this claim.

\subsubsection{Experiment setup}

We run a 5-level multilevel MCMC routine for the linear elastic beam model, using as quantity of interest $Q$ the spatial average of the stiffness value within the bottom center third of the beam. The parameters used for this run can be found in Table \ref{tab:exp1}. The level-dependent fidelity $\sigma_{F,\ell}$ is taken equal to $\sigma_F$ at each level. In this experiment, we take as truncation number $M_{\ell}=50\cdot(\ell+1)$. To show the dependency of the convergence on $M_{\ell}$, we do not vary $R_{\ell}$ in this case, but keep it constant at an intermediately coarse discretisation of 30 elements in horizontal direction and 24 in vertical, about the highest discretisation level feasible for this experiment, which does not exploit different resolutions. This is of course not a realistic setup in practice as the multilevel method is primarily designed to be used for high-resolution setups, but it allows us to check the predicted values of $\alpha'$ and $\beta'$, without having any contribution from the terms of $\alpha$ and $\beta$.

\begin{table}[h]
\centering
\begin{tabular}{|c|c|c|c|c|c|c|} 
\hline
$\ell$ & 0 & 1 & 2 & 3 & 4 \\
\hline
$M_{\ell}$ & 50 & 100 & 150 & 200 & 250 \\
\hline
$R_{\ell}$ & 30$\times$24 & 30$\times$24 & 30$\times$24 & 30$\times$24 & 30$\times$24 \\
\hline
$\tau_{\ell}$ & & 100 & 5 & 5 & 5 \\
\hline
\end{tabular}
\caption{Parameters of the multilevel MCMC run used in the experiment of Subsection \ref{sec:exp_smoothness}.}
\label{tab:exp1}
\end{table}

Finally, our method requires a value of the subsampling rate $\tau_{\ell}$ between levels. Here we choose $\tau_1=100$ and a considerably lower value of $\tau_{\ell}=5$ for the higher levels. These values are chosen ad hoc because they provide a good balance between mixing and computational cost. A more theoretically substantiated choice would be to take $\tau_{\ell}$ equal to twice the integrated autocorrelation time (IAT), which can be estimated online. It is believed, however, that twice the IAT is an unnecessarily large value for the subsampling rate in practice, and that far smaller values can still provide good mixing for far less computational cost \cite{dodwell_hierarchical_2015}. This can be corroborated by our findings, as our method produces a highly autocorrelated chain at the coarsest level, with an IAT of approximately 2900, yet a subsampling rate of 100 seems sufficient to obtain well-mixed samples at the finer levels, and allows considerably more samples to be generated. Although a more in-depth cost-effect analysis of the subsampling rate would be of value, this is beyond the scope of this article.

\begin{figure}[h]
\centering
\begin{subfigure}[b]{0.5\textwidth}
\includegraphics[width=\textwidth]{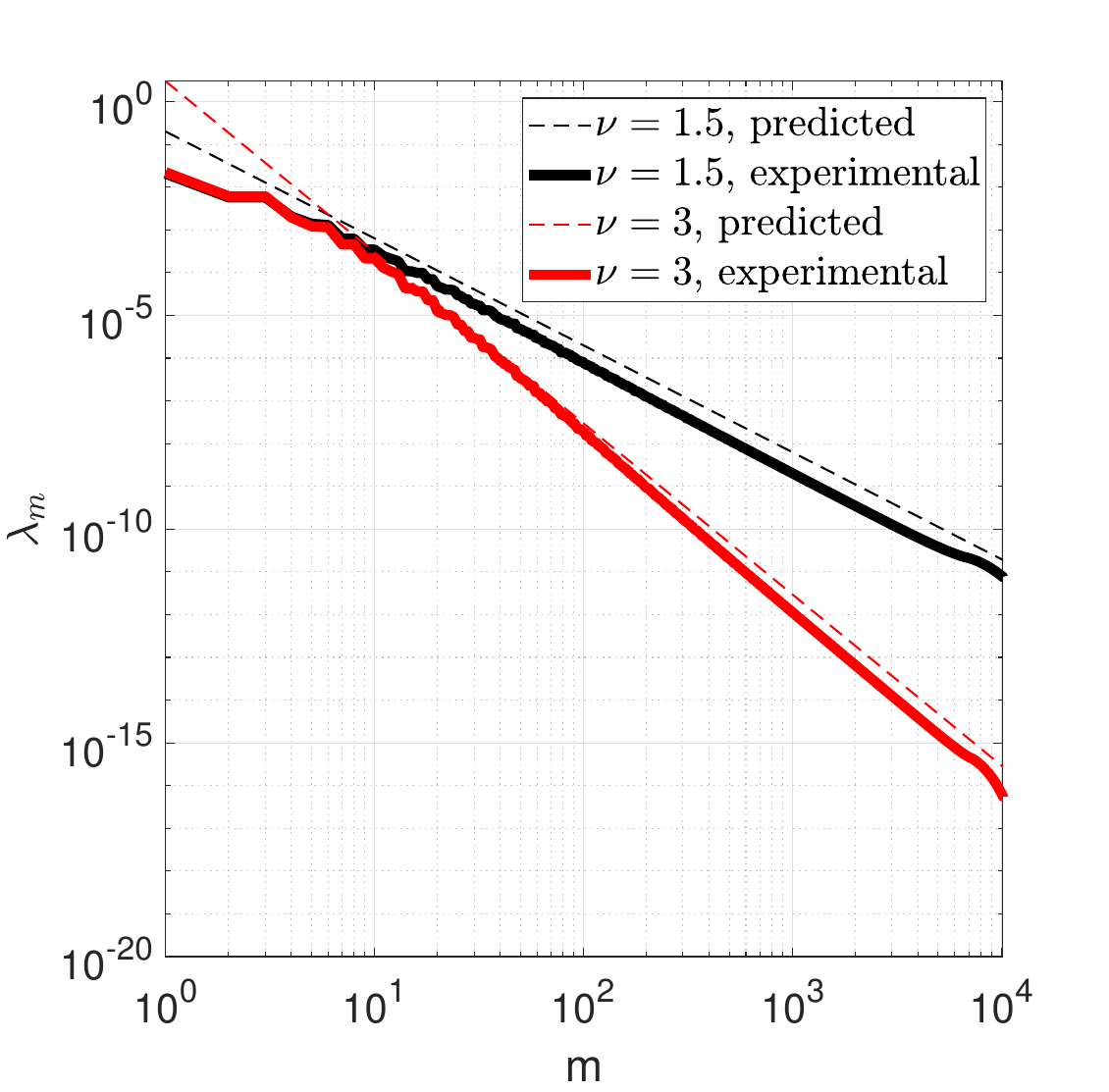}
\end{subfigure}%
\begin{subfigure}[b]{0.5\textwidth}
\includegraphics[width=\textwidth]{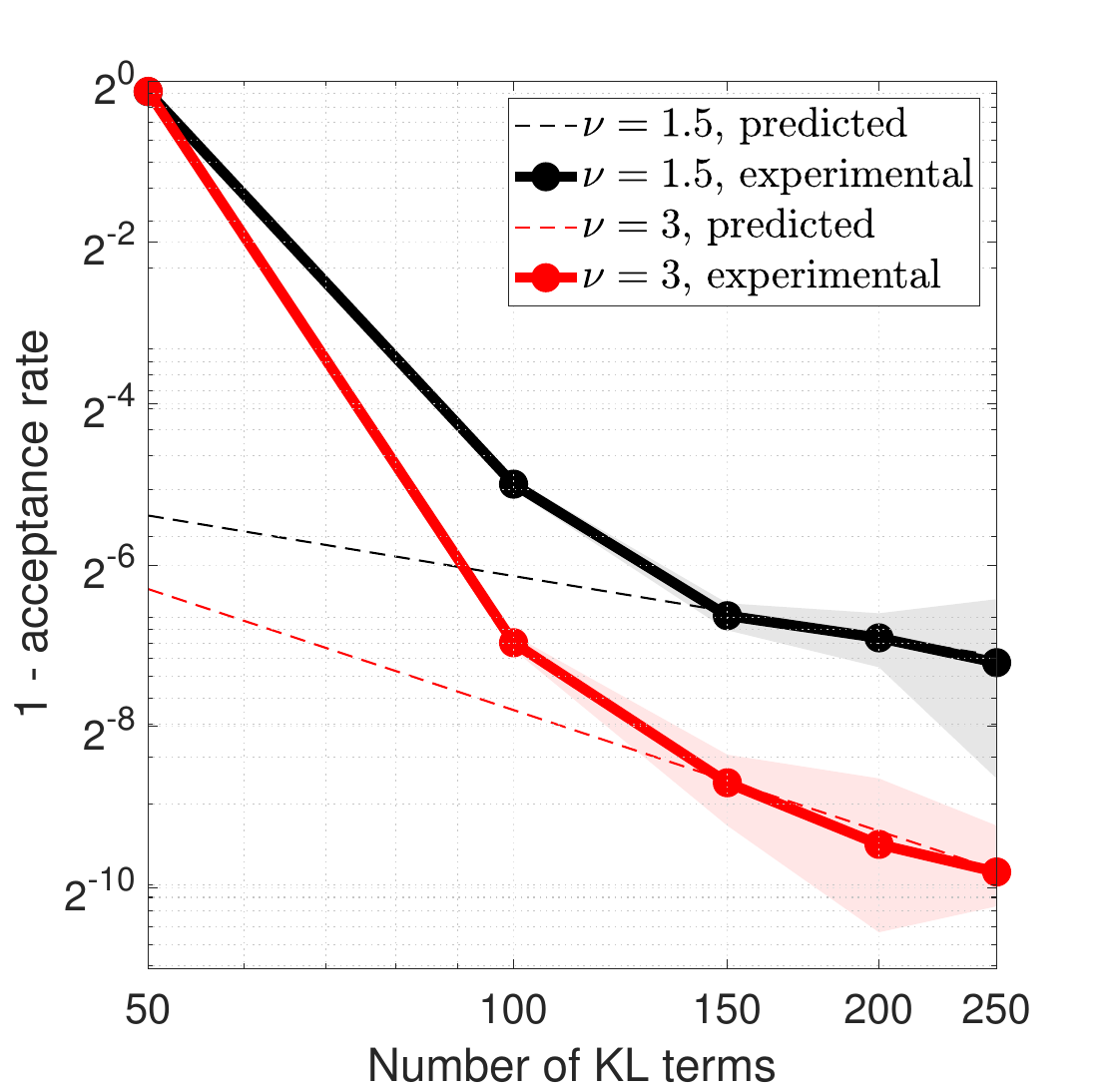}
\end{subfigure}
\caption{Left: eigenvalues of the Mat\'ern covariance operator for two different values of the smoothness parameter $\nu$. Right: corresponding convergence of the acceptance rate for the samples in the multilevel acceptance rate. In both figures, dashed lines are the theoretical trends, defined up to a constant, while solid lines are experimental results. The shaded regions on the right illustrate confidence intervals obtained by calculating the estimate on each of the 35 parallel chains separately.}
\label{fig:matern_exp}
\end{figure}

\subsubsection{Experiment results}

We run the same experiment for two different Mat\'ern covariance functions, both taken with values of $\lambda = 0.5,\ \sigma = 2$. The smoothness parameter is taken to be $\nu=1.5$ for the first experiment and $\nu=3$ for the second, corresponding to $k=5$ and $k=8$, respectively. The left plot of Figure \ref{fig:matern_exp} shows in dashed lines the expected eigenvalue decay for a covariance function as obtained from the bound in Lemma \ref{thm:bound_eigvals}. The solid lines show the numerically calculated eigenvalues for the choices of $\nu$.

The right plot of Figure \ref{fig:matern_exp} shows the level-dependent rejection rate of samples in the multilevel MCMC routine. From the proof of Proposition \ref{thm:M2holds} it follows that the specific value of $\beta'$ arises from the following bound on the rejection rate

\begin{equation}
\mathbb{E}_{\zeta}\left[\left(1-\alpha_{M_{\ell},R_{\ell}}(\theta_{\ell}'|\theta_{\ell}^n)\right)\right] \lesssim \left(R_{\ell}^{-1/d+\delta}+M_{\ell}^{-\frac{1}{2}(\frac{k}{d}-1)+\delta}\right),
\end{equation}

where $\zeta$ is the joint distribution of $\theta_{\ell}^n,\theta_{\ell}'$. This bound is shown by dashed lines. The empirical rejection rates at each level are shown in connected dots. We see a clear agreement between the theoretical predictions and the results of the experiment. The large rejection rate at the coarsest level is mainly a pre-asymptotic effect as all rates are obtained with respect to $\ell \to \infty$.

\subsection{Comparison between single- and multilevel MCMC}
\label{sec:exp_singlemulti}

We next conduct an experiment where both $M_{\ell}$ and $R_{\ell}$ are varied across the different levels in the multilevel algorithm. We now run a 6-level version of the method, with the same subsampling rates as above. We take as truncation number $M_{\ell}=50\cdot(\ell+1)$. The resolution $R_{\ell}$ of the FE grid is now taken as $15\times 12$ on the coarsest grid, and this time the number of elements in each direction is doubled at every level. This constitutes an approximate 1000-fold increase in elements between the coarsest and finest levels. The values for this experiment are summarized in Table \ref{tab:exp2}.

\begin{table}[h]
\centering
\begin{tabular}{|c|c|c|c|c|c|c|} 
\hline
$\ell$ & 0 & 1 & 2 & 3 & 4 & 5 \\
\hline
$M_{\ell}$ & 50 & 100 & 150 & 200 & 250 & 300 \\
\hline
$R_{\ell}$ & 15$\times$12 & 30$\times$24 & 60$\times$48 & 120$\times$96 & 240$\times$192 & 480$\times$384 \\
\hline
$\tau_{\ell}$ & & 100 & 5 & 5 & 5 & 5 \\
\hline
\end{tabular}
\caption{Parameters of the multilevel MCMC run used in the experiment of Subsection \ref{sec:exp_singlemulti}.}
\label{tab:exp2}
\end{table}

The left plot of Figure \ref{fig:ml_exp} shows the cost in CPU time of generating an individual sample at each level in this setup. The total cost has two main contributions: solving the linear system to obtain $F_{\ell}(\theta_{\ell})$, and evaluating the likelihood of Equation \eqref{eq:levellike}. The black line is the reference cost of a single-level method, where both parts are performed at the finest, most expensive level for each sample. In red is the cost for a direct application of the multilevel MCMC algorithm without the level-dependent treatment of data outlined in Subsection \ref{sec:leveldata}. Instead, the output $F_{\ell}(\theta_{\ell})$ is interpolated onto the finest-level mesh to be directly compared to $u^{\text{obs}}$. In this case, there is a clear reduction in sample cost at the middle levels $\ell=3,4$, because at these levels solving the forward model is the largest computational bottleneck. However, the cost reduction plateaus at the coarsest levels, because here the cost of solving the forward model drops below that of evaluating the likelihood function, which becomes the new bottleneck. This loss of efficiency is countered by the level-dependent data treatment shown in blue, where the lower cost is continued for the coarsest levels as well.

\begin{figure}[h]
\centering
\begin{subfigure}[b]{0.5\textwidth}
\includegraphics[width=\textwidth]{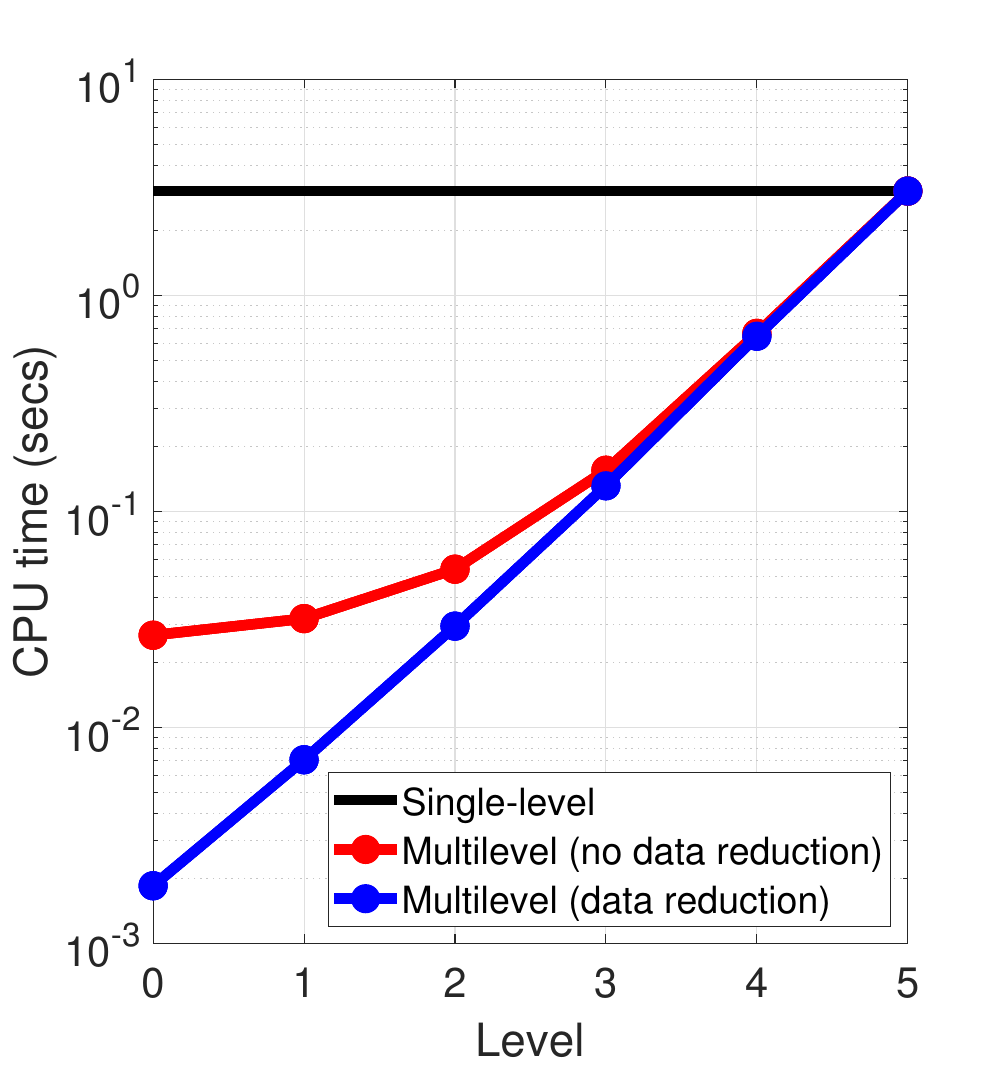}
\end{subfigure}%
\begin{subfigure}[b]{0.5\textwidth}
\includegraphics[width=\textwidth]{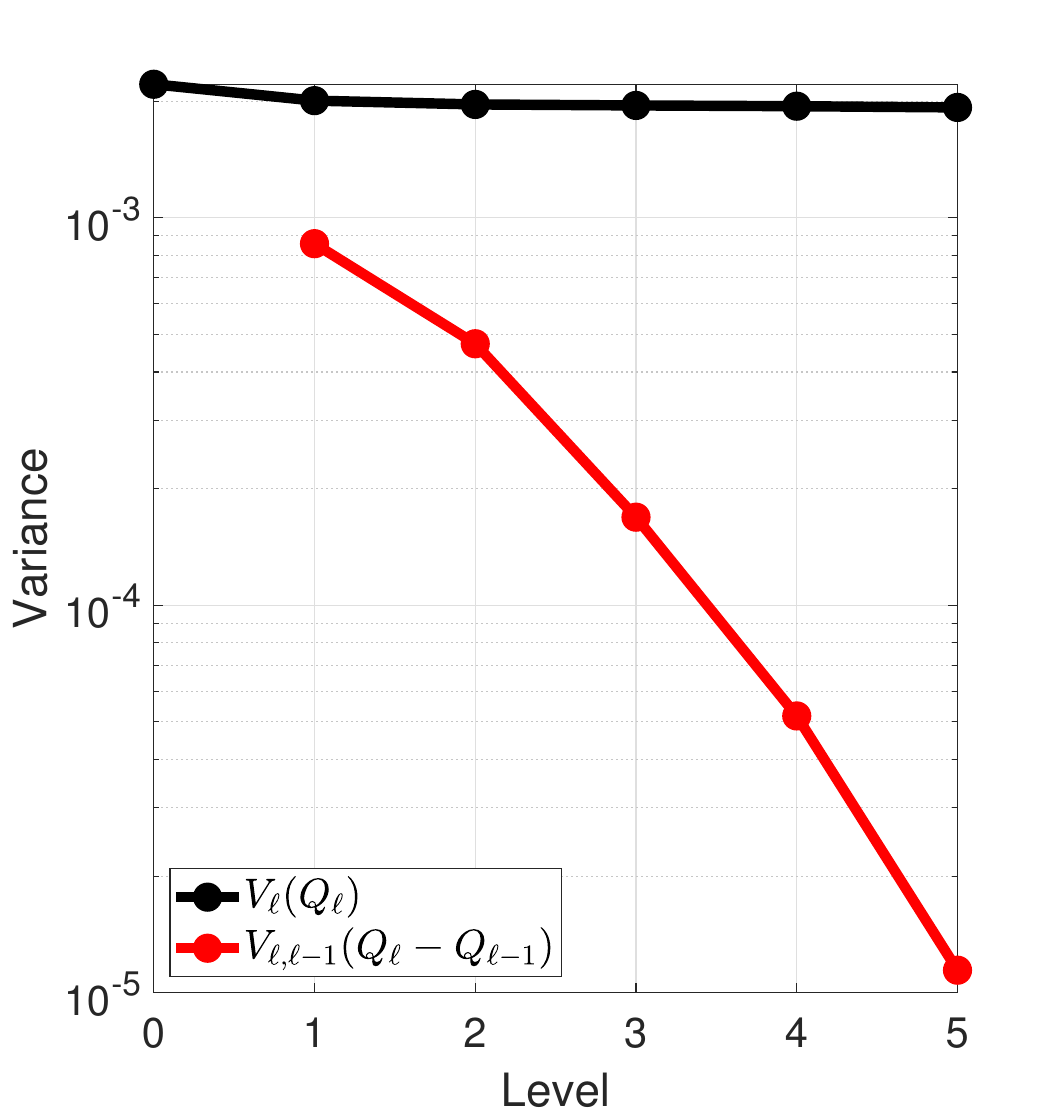}
\end{subfigure}
\caption{Left: cost of generating a single sample at each level of the multilevel algorithm. Right: variance of the single-level estimator for the quantity of interest at each level versus the correction terms in the multilevel approach.}
\label{fig:ml_exp}
\end{figure}

The right plot of Figure \ref{fig:ml_exp} shows the gain of using the multilevel correction terms $Q_{\ell}-Q_{\ell-1}$ over the classical approach of calculating the quantity of interest on one (high-resolution) level. As outlined by Theorem \ref{thm:epscost}, while the sample variance of the quantity of interest is more or less constant at each level (but with a higher Monte Carlo error due to a lower sample size because of the increased cost), the variances of the correction terms are much smaller. This well-known result for multilevel Monte Carlo methods \cite{dodwell_hierarchical_2015,giles_multilevel_2015} is thus also obtained by our method.

\subsection{Ground truth reconstruction}
\label{sec:reconstruction}

As a final experiment, we show the posterior estimates of a given stiffness field using the multilevel MCMC algorithm. The ground truth is generated by sampling 300 standard normal realisations for the $\{\xi_m\}$ which are fed first through the KL expansion and then the transformation of Equation \eqref{eq:transformation}. This ground truth is shown in the left plot of Figure \ref{fig:postmean}. We run the multilevel MCMC routine with the same values of $R_{\ell},\tau_{\ell}$ as in Section \ref{sec:exp_singlemulti}, and this time $M_{\ell} = 20\cdot (\ell+1)$. The resulting posterior mean for this field in shown in the right plot of Figure \ref{fig:postmean}.

\begin{figure}[h]
\centering
\begin{subfigure}[b]{0.9\textwidth}
\includegraphics[width=\textwidth]{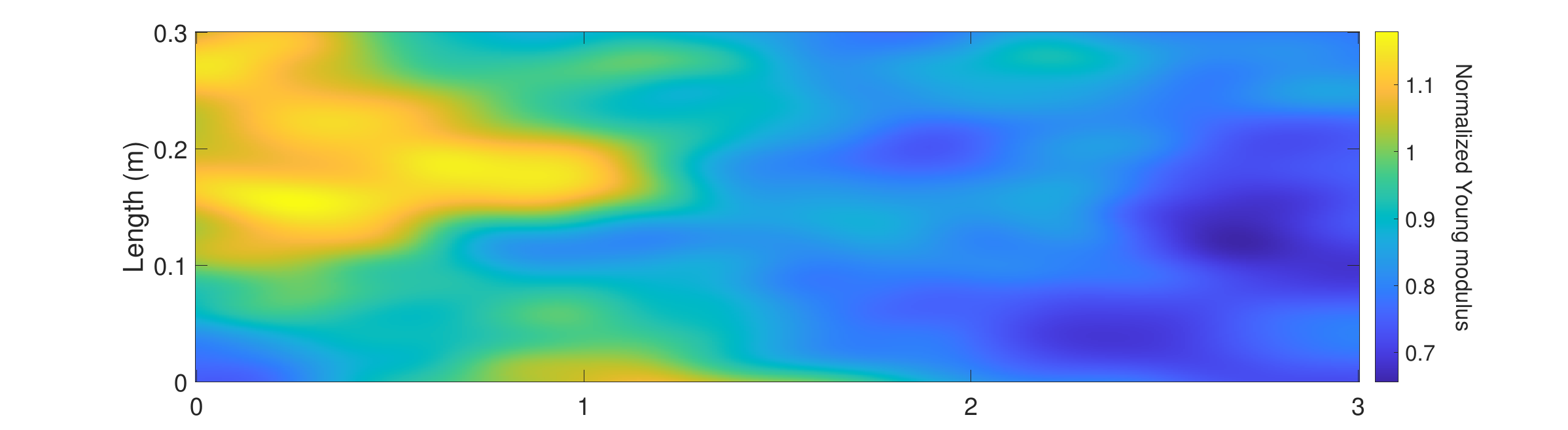}
\end{subfigure}
\begin{subfigure}[b]{0.9\textwidth}
\includegraphics[width=\textwidth]{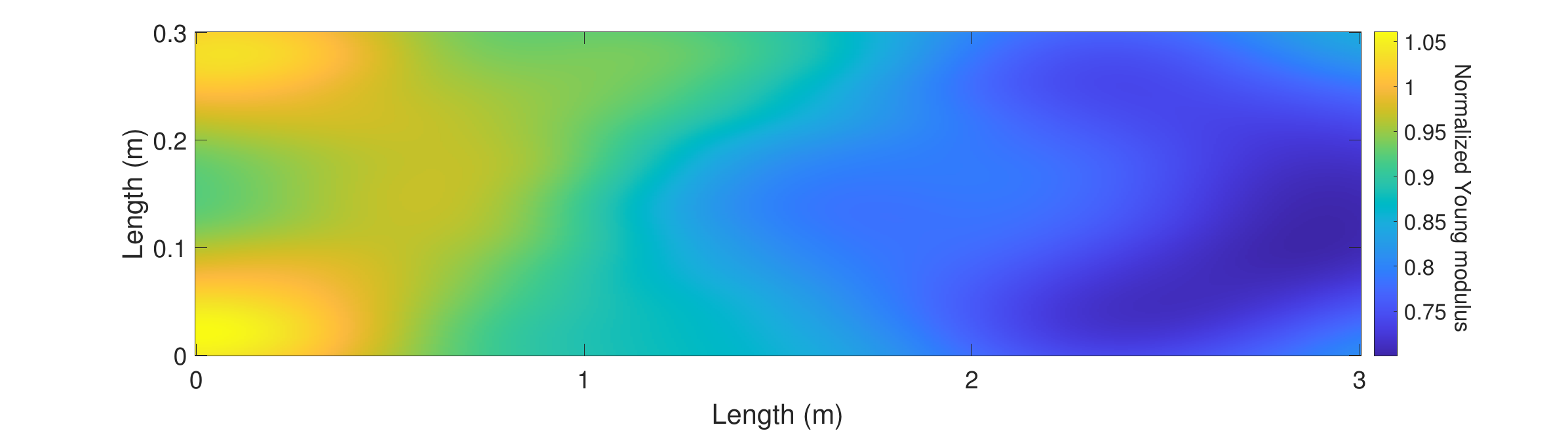}
\end{subfigure}
\caption{Top: Ground truth value of normalized Young's modulus. Bottom: posterior mean of multilevel MCMC estimator obtained at the finest level.}
\label{fig:postmean}
\end{figure}

The posterior mean, while smoothed out, is in good agreement overall with the ground truth, but of course only shows part of the picture. The multilevel MCMC method allows us to also obtain samples at the finest level that are distributed according to the posterior at the finest level. There are many ways of employing these samples to obtain uncertainty estimates, but the simplest method that is commonly used in structural mechanics contexts is simply looking at individual samples and comparing their likelihood (and prior) scores \cite{tarantola_inverse_2005}. Figure \ref{fig:samples} contains 4 such samples, which shows that there is quite a lot of variability in samples that nevertheless all have similar likelihood scores. This is important information for practitioners, as it shows that the posterior mean should not be carelessly used, highlighting one of the benefits of (multilevel) MCMC approaches.

\begin{figure}[h!]
\centering
\begin{subfigure}[b]{0.9\textwidth}
\includegraphics[width=\textwidth]{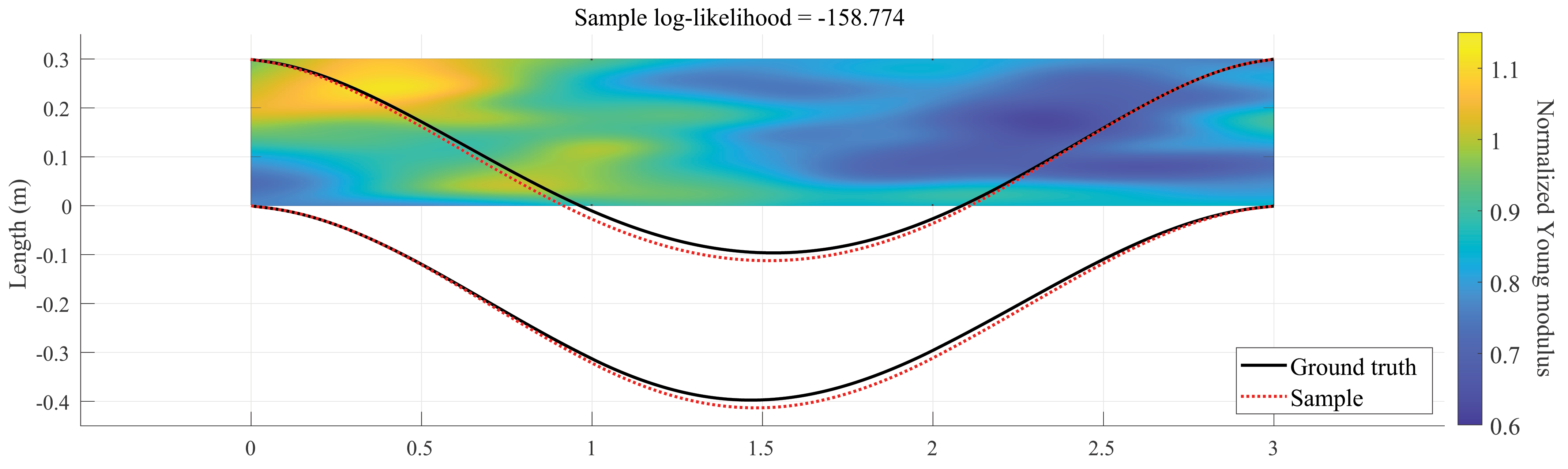}
\end{subfigure}
\begin{subfigure}[b]{0.9\textwidth}
\includegraphics[width=\textwidth]{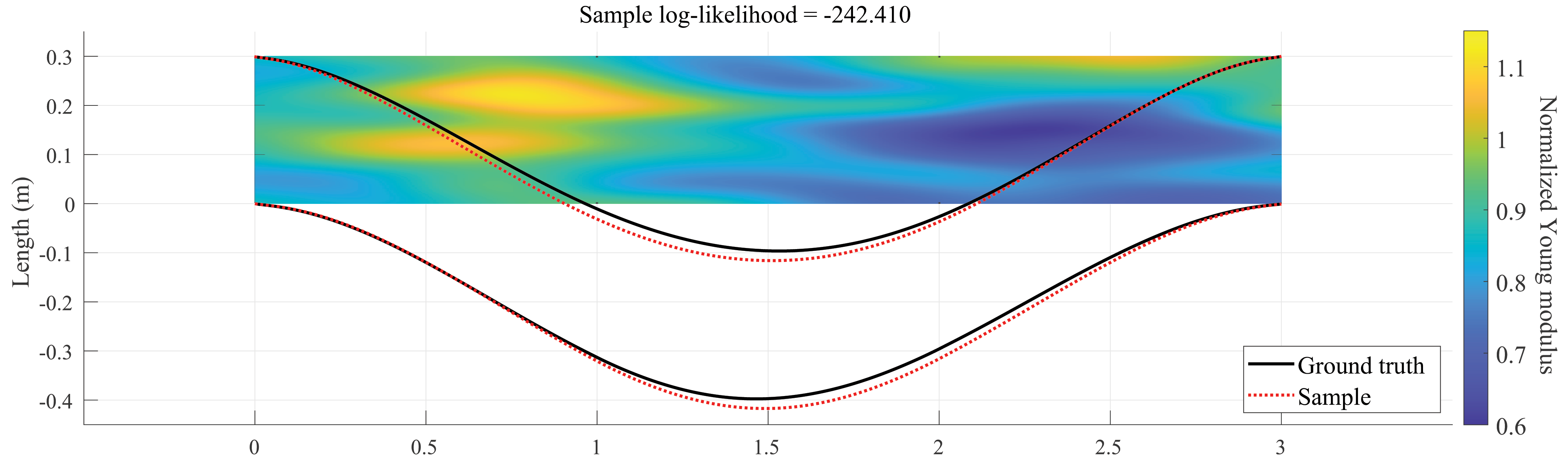}
\end{subfigure}
\begin{subfigure}[b]{0.9\textwidth}
\includegraphics[width=\textwidth]{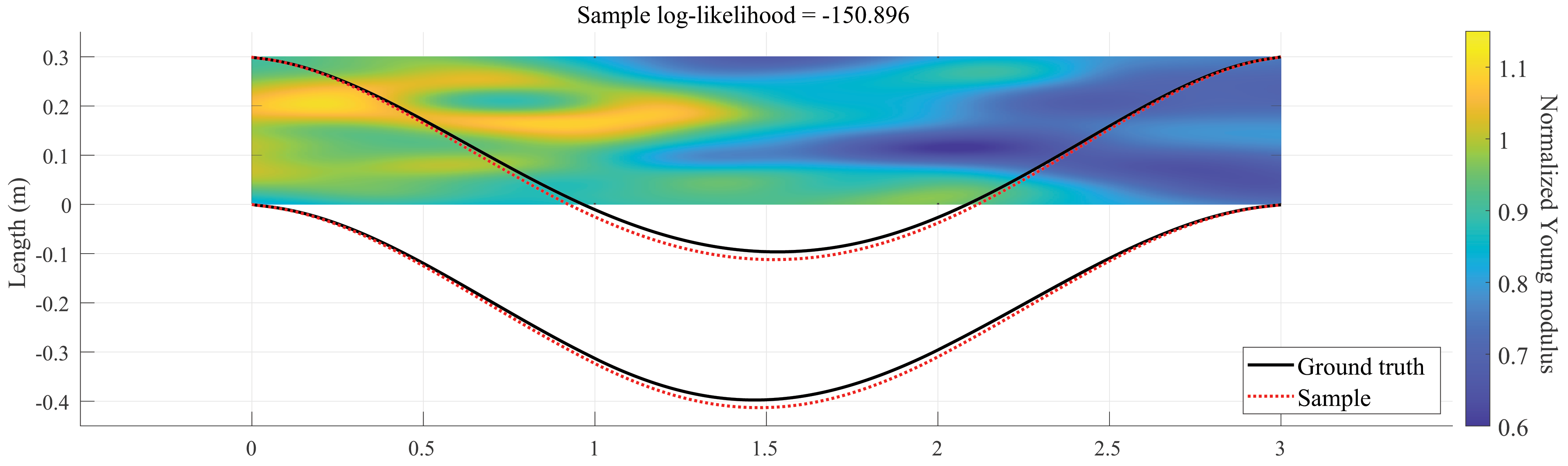}
\end{subfigure}
\begin{subfigure}[b]{0.9\textwidth}
\includegraphics[width=\textwidth]{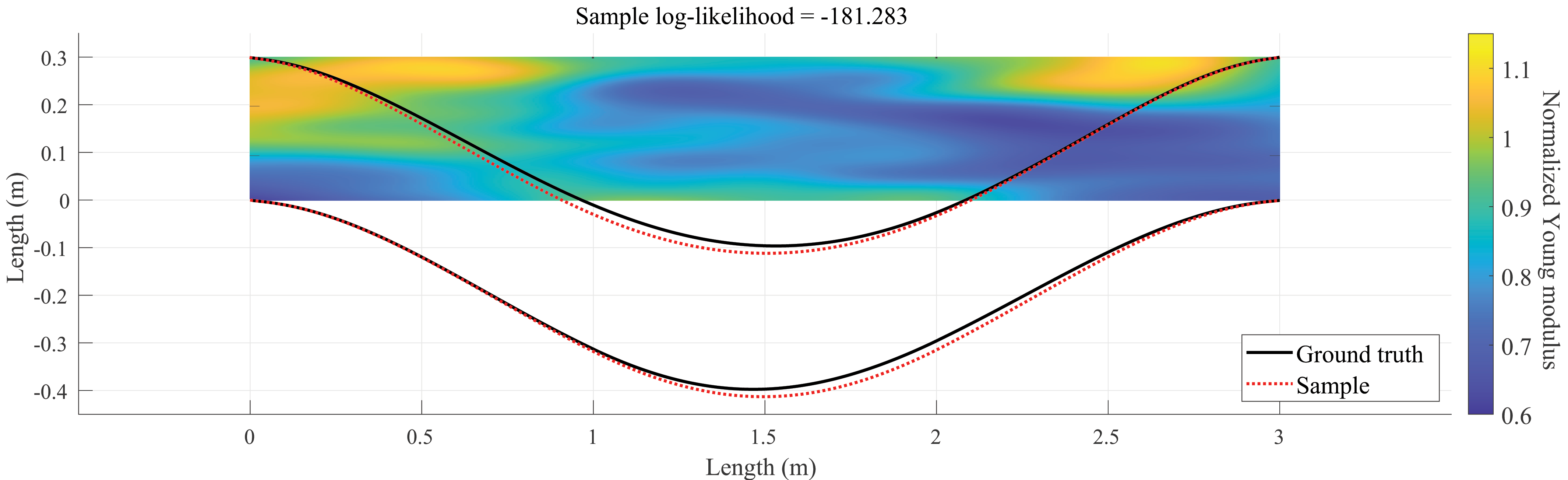}
\end{subfigure}
\caption{Normalized values of Young's modulus of the beam material for four different samples generated at the finest level of the multilevel MCMC algorithm, with corresponding log-likelihood scores and displacement observations for each of the samples (red) and for ground truth stiffness (black). Displacements are amplified by a factor $5\cdot 10^4$ for visibility.}
\label{fig:samples}
\end{figure}

\section{Conclusion}

We show how the existing multilevel MCMC methodology, which has previously been used effectively for high-resolution models with nevertheless low-resolution discrete observations, can be extended to fit in the case where both the model and the data are of very high resolution. We do this by extending the level-dependent treatment of parameters and discretisations to also include the observations. We add a weighting function to the observations which reduces their dimensionality such that these weighted observations can be used as proxies to the true observations at coarser levels in the multilevel algorithm.

We show how the level-dependent treatment of observations can be included in the multilevel algorithm such that, under mild assumptions, there are no losses in efficiency compared to the level-independent case. This is supported by a numerical experiment, which illustrates that for very high-resolution data some form of reduction on the data is necessary to continue exploiting the computational efficiency of the coarser levels in the multilevel hierarchy.

Our convergence analysis focuses on a broad class of probability models, including any mildly bounded Lipschitz continuous transformation of normal fields with Mat\'ern covariance kernels of sufficient smoothness ($\nu \geq 1$). Convergence rates for these types of models are obtained and illustrated through a different numerical experiment. Finally, we show how the method reconstructs posterior estimates for a simple stiffness reduction model case. We show that the MCMC method can recover accurate posterior estimates, and point to limitations in the amount of information present in the data, making this approach well-suited for uncertainty quantification.

\section*{Acknowledgements}

The authors would like to thank Christiaan Mommeyer for his assistance with implementing the linear elasticity forward models in the Stabil toolbox for Matlab, Arne Bouillon for his useful remarks in proofreading the convergence analysis of Section \ref{ch:conv_analysis}, and the anonymous referees for their helpful comments that improved the quality of this work.

\bibliography{mylibrary}

\end{document}